\theoremstyle{plain}
\newtheorem{thm}[subsection]{Theorem}
\newtheorem{lem}[subsection]{Lemma}
\newtheorem{prop}[subsection]{Proposition}
\newtheorem{cor}[subsection]{Corollary}
\theoremstyle{definition}
\newtheorem{rk}[subsection]{Remark}
\newtheorem{definition}[subsection]{Definition}
\newtheorem{ex}[subsection]{Example}
\newcommand{\PP}{\mathbb{P}}
\newcommand{\ZZ}{\mathbb{Z}}
\newcommand{\MM}{\mathbb{M}}
\newcommand{\NN}{\mathbb{N}}
\newcommand{\Zc}{\mathcal{Z}}
\newcommand{\Jc}{\mathcal{J}}
\newcommand{\HF}{\mathrm{HF}}
\newcommand{\HP}{\mathrm{HP}}
\newcommand{\Proj}{\mathrm{Proj}}
\newcommand{\ann}{\mathrm{ann}}
\newcommand{\coker}{\mathrm{coker}\, }
\newcommand{\codim}{\mathrm{codim}}
\newcommand{\Bc}{\mathcal{B}}
\newcommand{\Sc}{\mathcal{S}}
\newcommand{\Rc}{\mathcal{R}}
\newcommand{\Kc}{\mathcal{K}}
\newcommand{\Sr}{\mathscr{S}}
\newcommand{\Ar}{\mathscr{A}}
\newcommand{\Rr}{\mathscr{R}}
\newcommand{\Cr}{\mathscr{C}}
\newcommand{\mm}{\mathfrak{m}}
\newcommand{\qq}{\mathfrak{q}}
\newcommand{\fb}{\mathbf{f}}
\newcommand{\xb}{\mathbf{x}}
\newcommand{\la}{\langle}
\newcommand{\ra}{\rangle}
\begin{document}

\title[Determinantal tensor product surfaces]{Determinantal tensor product surfaces and the method of moving quadrics}

\author{Laurent Bus\'e}
\address{Universit\'e C\^ote d'Azur, Inria, 2004 route des Lucioles, 06902 Sophia Antipolis, France.}
\email{laurent.buse@inria.fr}

\author{Falai Chen}
\address{School  of Mathematics, University of Science and Technology of China, Hefei, Anhui 230026,  China.}
\email{chenfl@ustc.edu.cn}

\subjclass{13D02,13D45,13P25,14Q10}
\keywords{Rational surfaces, syzygies, Rees algebra, implicitization}

\date{\today}
\maketitle

\begin{abstract} A tensor product surface $\Sr$ is an algebraic surface that is defined as the closure of the image of a rational map $\phi$ from $\PP^1\times \PP^1$ to $\PP^3$. We provide new determinantal representations of $\Sr$ under the assumptions that $\phi$ is generically injective and its base points are finitely many and locally complete intersections. These determinantal representations are matrices that are built from the coefficients of linear relations (syzygies) and quadratic relations of the bihomogeneous polynomials defining $\phi$. Our approach relies on a formalization and generalization of the method of moving quadrics introduced and studied by David Cox and his co-authors.
\end{abstract}

\section{Introduction}\label{sec:intro}

The question of finding a determinantal representation of a projective hypersurface, i.e.~to write down its defining equation as the determinant of a square matrix filled with homogeneous forms, is a classical topic with a long history that goes back to the beginning of the last century (see for instance \cite{Beauville00} and the references therein). More recently, stimulated by applications in geometric modeling, the case of parameterized surfaces in $\PP^3$ has been extensively investigated, in particular for tensor product surfaces. An algebraic surface $\Sr$ in $\PP^3$ is called a \emph{tensor product surface} if it is the closure of the image of a rational map $\phi$ from $\PP^1\times \PP^1$ to $\PP^3$. Let $k$ be an algebraically closed field and let $R_1=k[s_0,s_1]$, $R_2=k[t_0,t_1]$ and $S=k[x_0,\ldots,x_3]$ be the coordinate rings of two $\PP^1$ and $\PP^3$. The rational map $\phi$ is of the form
$$\begin{array}{rlc}
\phi : \PP^1\times \PP^1 & \dasharrow & \Sr \subset \PP^3 \\
	(s_0:s_1)\times(t_0:t_1) & \mapsto & (f_0:f_1:f_2:f_3)
\end{array}
$$
where $f_0,f_1,f_2$ and $f_3$ are four bi-homogeneous polynomials in $R=R_1\otimes_k R_2$ of bidegree $(m,n)$, $m,n\geq 1$. Without loos in generality, one can assume that the base locus $\Bc$ of $\phi$ defines finitely many points.

\medskip

The classical method to obtain a determinantal representation of a tensor product surface is the Dixon resultant  which dated back to 1909 \cite{DixonRes}. However, this method only applies if $\phi$ has no base points and it builds a relatively large matrix of size $2mn$. At the end of the 20th century, the problem of determining a determinantal representation of a parameterized surface, also called the implicitization problem, received a lot of interest motivated by applications in geometric modeling for computing intersections between algebraic curves and surfaces by means of computer-assisted methods. Thus, in order to obtain compact determinantal representations and to overcome the difficulty of base points, a new empirical technique has been introduced by Sederberg and Chen and called \emph{the method of moving surfaces}  \cite{SC95}. They verified dozens of examples to demonstrate the validity of this method but without any rigorous proof. A few years later, David Cox noticed that syzygies of the polynomials $\fb:=(f_0,\ldots,f_3)$ and their quadratic relations are at the center of this technique and then proving its validity became an attractive open problem and a source of further developments in other connected topics (see e.g.~\cite[Chapter 1 and Chapter 3]{CoxCBMS} and the references therein).

\medskip

In this paper we will focus on the method of moving surfaces over $\PP^1\times \PP^1$ for tensor product surfaces. In this setting, this method is called the method of moving quadrics and it can be described as follows. A \emph{moving plane} of bidegree $(\mu,\nu)\in \NN^2$ is a polynomial of the form
$$L(s_0,s_1;t_0,t_1;x_0,x_1,x_2,x_3)=\sum_{i=0}^3 g_i(s_0,s_1;t_0,t_1)x_i$$
where $g_0,g_1,g_2,g_3$ are bihomogeneous polynomials in $R$ of bidegree $(\mu,\nu)$. This polynomial defines a family of planes in $\PP^3$ parameterized by $\PP^1\times\PP^1$, hence its name. In addition, the moving plane $L$ is said to follow $\phi$ if $\sum_i f_ig_i=0$ in $R$. Geometrically, this means that the plane of equation $L=0$ goes through the point $\phi(s_0,s_1;t_0,t_1) \in \Sr$. The moving planes of bidegree $(\mu,\nu)$ following $\phi$ form a $k$-vector space that we denote by $V_{(\mu,\nu)}$. Algebraically, $V_{(\mu,\nu)}$ is simply the vector space of syzygies of $\fb$ of bidegree $(\mu,\nu)$.

Similarly, a \emph{moving quadric} of bidegree $(\mu,\nu)$ is a polynomial of the form  
$$Q(s_0,s_1;t_0,t_1;x_0,x_1,x_2,x_3)=\sum_{0\leq i \leq j\leq 3} g_{i,j}(s_0,s_1;t_0,t_1)x_ix_j$$
where the $g_{i,j}$'s are bihomogeneous polynomials in $R$ of bidegree  $(\mu,\nu)$. It is said to follow $\phi$ if $\sum_{i,j}g_{i,j}f_if_j=0$ and we denote by $W_{(\mu,\nu)}$ the $k$-vector space of moving quadrics following $\phi$ of bidegree $(\mu,\nu)$, equivalently the space of quadratic relations of $\fb$ of bidegree $(\mu,\nu)$ in $R$. By multiplying a moving plane in $V_{(\mu,\nu)}$ with a linear form in $S_1$ we obtain a (degenerated) moving quadric in $W_{(\mu,\nu)}$; we denote by $V'_{(\mu,\nu)}$ the vector subspace of $W_{(\mu,\nu)}$ generated this way by moving planes in $V_{(\mu,\nu)}$.   

Now, choose a couple of integers $(\mu,\nu)$, a basis $\langle L_1,\ldots,L_l \rangle$ of $V_{(\mu,\nu)}$, a basis $\langle Q_1,\ldots,Q_q \rangle$ of $W_{(\mu,\nu)}/V'_{(\mu,\nu)}$ and a basis $\mathfrak{B}=\langle b_1,\ldots,b_{(\mu+1)(\nu+1)}\rangle$ of the vector space $R_{(\mu,\nu)}$ of bihomogeneous polynomials of bidegree $(\mu,\nu)$ in $R$. The matrix $\MM_{(\mu,\nu)}$ built from the coefficients of $L_i$ and $Q_j$ with respect to $\mathfrak{B}$, i.e.~the matrix which satisfies to the equality
$$ \left[ b_1 \, \cdots \, b_{(\mu+1)(\nu+1)} \right] \cdot \MM_{(\mu,\nu)} = \left[ L_1 \, \cdots \, L_l \ Q_1 \, \cdots \, Q_q  \right],$$
is called the matrix of moving planes and quadrics of bidegree $(\mu,\nu)$ following $\phi$. The method of moving quadrics then asks if there exists a couple of integers $(\mu,\nu)$ such that the matrix $\MM_{(\mu,\nu)}$ is square and its determinant yields an implicit equation of $\Sr$. Geometrically, this means that the linear system composed of these $l$ moving planes and $q$ moving quadrics cut out exactly the surface $\Sr$. 

\medskip

 The first theoretical result on the validity of the method of moving quadrics for tensor product surfaces is proved in the seminal paper \cite{CGZ00}: Assume that $\phi$ has no base point and is generically injective, if there are no moving planes of bidegree $(m-1,n-1)$ following $\phi$, i.e.~if $V_{(m-1,n-1)}=0$, then $W_{(m-1,n-1)}$ is of dimension $mn$ and $\MM_{(m-1,n-1)}$ yields a determinantal representation of $\Sr$ built solely with moving quadrics. Since then, the question of whether this result can be generalized and can adapt automatically to the presence of base points  became a research topic of interest (see the comments and open questions in \cite[Section 6]{CGZ00} and \cite[Section 6]{Cox01}). To the best of our knowledge, the more general result on the validity of the method of moving quadrics in the presence of base points is given in \cite{AHW05} where the authors show that $\MM_{(m-1,n-1)}$ yields a determinantal representation of the surface under a list of several restrictive hypothesis (see \cite[Section 4]{AHW05}). 
 
In this paper we prove the validity of the method of moving quadrics under mild assumptions and hence obtain new determinantal representations for tensor product  surfaces in the presence of base points. To achieve this goal, we provide a new interpretation of this method by lifting $\phi$ to a certain blowup of $\PP^1\times \PP^1$ along its base locus $\Bc$. From a more algebraic perspective, we interpret matrices $\MM_{(\mu,\nu)}$ as presentation matrices of some graded components of the symmetric algebra of the ideal $I=(\fb)$ mod out by its torsion, which is nothing but the Rees algebra of $I$ if the base points are all assumed to be locally complete intersections. As we will see, a key ingredient in our approach to adapt the presence of base points is the maximum degree $\mu_0$ of a minimal syzygy of the $R_1$-module of syzygies of $I$ of degree $n-1$ with respect to the grading induced by $R_2$. We define $\nu_0$ similarly by considering the syzygies of $\fb$ of degree $m-1$ with respect to the grading induced by $R_1$. We will prove the following result:

\begin{thm}\label{thm:intro1} Assume that $\phi$ is generically injective and that its base points, if any, are locally complete intersections. 
Then, the matrix $\MM_{(\mu_0-1,n-1)}$ (resp.~$\MM_{(m-1,\nu_0-1)}$) yields a determinantal representation of $\Sr$ if and only if the ideal $\fb'=(f_0',f_1',f_2')$ generated by three general $k$-linear combinations of $\fb$ has no syzygy in bidegree $(\mu_0-1,n-1)$ (resp.~$(m-1,\nu_0-1)$).
\end{thm}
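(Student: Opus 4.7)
The plan is to interpret the matrix $\MM_{(\mu,\nu)}$ as a presentation, as a graded $S$-module, of the bigraded component $\Rc_{(\mu,\nu,\ast)}$ of the Rees algebra $\Rc=\mathrm{Rees}_R(I)$ of the ideal $I=(\fb)\subset R$. Because the base locus $\Bc$ consists of finitely many locally complete intersection points, the natural surjection $\mathrm{Sym}_R(I)\twoheadrightarrow \Rc$ has kernel supported at the irrelevant ideal $\nf=(s_0,s_1)\cap(t_0,t_1)$, so $\Rc$ can be studied through the homological tools attached to $\fb$, namely the approximation complex or $\Zc$-complex.

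\textbf{Step 1: Algebraic reinterpretation of $\MM_{(\mu,\nu)}$.} First, I would establish that the columns of $\MM_{(\mu,\nu)}$ form a system of generators for the first $S$-syzygies of $\Rc_{(\mu,\nu,\ast)}$, the latter being viewed as a graded $S$-module generated by the basis $\mathfrak{B}$ of $R_{(\mu,\nu)}$ in degree one. The linear columns are precisely the moving planes in $V_{(\mu,\nu)}$, while the quadratic columns correspond exactly to a basis of $W_{(\mu,\nu)}/V'_{(\mu,\nu)}$: quotienting by $V'_{(\mu,\nu)}$ removes the quadratic relations already implied by the linear syzygies. Thus $\MM_{(\mu,\nu)}$ is, up to harmless higher-degree relations, a presentation matrix of $\Rc_{(\mu,\nu,\ast)}$.

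\textbf{Step 2: Squareness via the syzygies of three generic combinations.} Write $\fb'=A\fb$ for a generic $3\times 4$ matrix $A\in \mathrm{Mat}_{3,4}(k)$. Its transpose induces a natural $R$-linear embedding of $\mathrm{Syz}(\fb')$ into $\mathrm{Syz}(\fb)$, and the quotient measures the moving planes of $\fb$ not coming from $\fb'$. A careful dimension count, comparing $\dim V_{(\mu,\nu)}$, $\dim V'_{(\mu,\nu)}$, $\dim W_{(\mu,\nu)}$ with $(\mu+1)(\nu+1)$, should show that at the boundary bidegree $(\mu,\nu)=(\mu_0-1,n-1)$, which is critical by the very definition of $\mu_0$, the matrix $\MM_{(\mu_0-1,n-1)}$ is square if and only if $\fb'$ has no syzygy in bidegree $(\mu_0-1,n-1)$. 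This is the purely algebraic translation of the condition stated in the theorem.

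\textbf{Step 3: Determinant equals the implicit equation.} Once squareness is established, compute $\det(\MM_{(\mu_0-1,n-1)})$ by a MacRae-type divisor-of-complex argument. Using the $\Zc$-complex of $\fb$ together with the LCI hypothesis and bigraded Castelnuovo-Mumford regularity estimates, one shows that in bidegree $(\mu_0-1,n-1)$ this complex is acyclic outside homological degree zero and therefore resolves $\Rc_{(\mu_0-1,n-1,\ast)}$. Standard properties of MacRae invariants then yield $\det(\MM_{(\mu_0-1,n-1)})=H^{\deg \phi}$ up to a nonzero scalar, where $H\in S$ is the implicit equation of $\Sr$. Generic injectivity of $\phi$ gives $\deg \phi=1$, so $\det(\MM_{(\mu_0-1,n-1)})=H$, yielding the announced determinantal representation. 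The argument for $\MM_{(m-1,\nu_0-1)}$ is entirely symmetric.

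\textbf{Main obstacle.} The principal technical difficulty is in Step 3: controlling the torsion of $\mathrm{Sym}_R(I)$ and proving acyclicity of the $\Zc$-complex at the precise boundary bidegree $(\mu_0-1,n-1)$; this is where the LCI hypothesis on $\Bc$ is used crucially. A secondary delicate point is the compatibility in Step 2 between syzygies of $\fb$ and of a generic $\fb'$, which requires handling the Koszul syzygies of $\fb'$ and checking that they do not appear in bidegree $(\mu_0-1,n-1)$, so that the equivalence is clean.
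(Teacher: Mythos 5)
Your high-level strategy (Steps 1 and 3) matches the paper's approach: interpret $\MM_{(\mu,\nu)}$ as a presentation matrix of a bigraded slice of $\Rc_I$ (equivalently $\Sc_I^*$ under the l.c.i.\ hypothesis), establish acyclicity of the $\Zc$-complex in the relevant bidegree, and invoke MacRae/determinant-of-complex theory. You also correctly identify, in your ``Main obstacle'' paragraph, exactly the two points where the real work lies. The problem is that the proposal leaves those two points unresolved, and they are the heart of the theorem, so there are genuine gaps.

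\textbf{Gap in Step 1.} You write that $\MM_{(\mu,\nu)}$ is a presentation of $\Rc_{(\mu,\nu,\ast)}$ ``up to harmless higher-degree relations.'' This is not harmless; it is precisely what has to be proved, and it is where the threshold $\mu_0$ enters. The paper's Proposition~\ref{prop:J2/J1} shows that for $\mu\geq\mu_0$ the torsion module $H^0_\mm(\Sc_I)_{(\mu-1,n-1)}\simeq(\Jc\la 2\ra/\Jc\la 1\ra)_{(\mu-1,n-1)}$ is a \emph{free} $S$-module generated in degree~$2$, so no cubic or higher relations in the $x_i$ are needed, and $H^1_\mm(\Sc_I)$ vanishes there as well. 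That statement rests on the spectral-sequence comparison for $\Cr^\bullet_\mm(\Zc_\bullet)$ (Proposition~\ref{prop:HmSI}) together with the freeness/Hilbert-function analysis of $\overline{Z}_1$ (Proposition~\ref{prop:freeness}), which is what gives $\mu_0$ its meaning as a threshold. Without this, you cannot assert that the columns of $\MM_{(\mu_0-1,n-1)}$ generate all the syzygies of $\Rc_{(\mu_0-1,n-1,\ast)}$ in the right degrees, nor that the resulting complex has Euler characteristic zero.

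\textbf{Gap in Step 2.} The equivalence ``$\MM_{(\mu_0-1,n-1)}$ square $\Leftrightarrow$ $\fb'$ has no syzygy in bidegree $(\mu_0-1,n-1)$'' is not a dimension count, and a ``natural embedding $\mathrm{Syz}(\fb')\hookrightarrow\mathrm{Syz}(\fb)$'' is not what is used. Squareness amounts to $(\Zc_2)_{(\mu_0-1,n-1)}=0$, i.e.\ the vanishing of a piece of the second Koszul \emph{cycles} $Z_2$ of $\fb$, and the paper relates $Z_2$ to $Z_1'$ by writing $I=I'+(h)$ and using the long exact sequence in Koszul homology
$\cdots\to H_2'\to H_2\to H_1'[-m,-n]\xrightarrow{\cdot(-h)}H_1'\to\cdots$
(Proposition~\ref{prop:Z2Z1p}). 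The forward direction uses only $H_2'=0$ and degree bookkeeping for the Koszul boundaries, but the converse (needed for the ``only if'') is subtle: one must choose $h\in(I')^{\mathrm{sat}}$ (possible since $\Bc$ is l.c.i., by \cite[Lemma~4.2]{AHW05}) so that multiplication by $h$ lands in the Koszul syzygies $B_1'$ via \cite[Corollary~3.15]{HoWa06}, forcing the connecting map to be zero in the relevant degrees and yielding the isomorphism $Z_2(\fb)_{(\mu-1+2m,3n-1)}\simeq Z_1'(\fb')_{(\mu-1+m,2n-1)}$. This is exactly the place where the l.c.i.\ hypothesis is used for the ``if and only if,'' and no dimension count alone will produce it. As written, your Step~2 does not establish the converse direction of the theorem.

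So the proposal is a correct roadmap but not a proof: both the freeness of the torsion piece (threshold $\mu_0$) and the identification of $(\Zc_2)_{(\mu_0-1,n-1)}$ with syzygies of $\fb'$ via the Koszul long exact sequence must be supplied.
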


This theorem generalizes the previously known results on the validity of the method of moving quadrics (e.g.~\cite{CGZ00,AHW05,CZS01}). It also covers the more recent results proved in \cite{LAI20191} where this method has been revisited in the absence of base points in order to remove the assumption $V_{(m-1,n-1)}=0$ used in \cite{CGZ00}. Actually, the new algebraic interpretation we propose of the method of moving quadrics allows us to prove the following general result.

\begin{thm}\label{thm:intro2} Assume that $\phi$ is generically injective and that its base points, if any, are locally complete intersections. Then, for any  $\mu\geq \mu_0$ there exists an exact sequence of graded free $S$-modules 
	$$ 0 \rightarrow S(-2)^r \xrightarrow{\partial_2} S(-1)^l \oplus S(-2)^q \xrightarrow{\partial_1} S^{\mu n}$$
whose determinant yields an implicit equation of the tensor product surface $\Sr$. The matrix of $\partial_1$ is $\MM_{(\mu-1,n-1)}$ and the matrix of $\partial_2$ is filled with linear forms obtained from second order Koszul syzygies of $\fb$. Moreover, this complex is a free resolution of the graded component of bidegree $(\mu-1,n-1)$ of the Rees algebra of the ideal $I$. 

Analogous results hold by symmetry for any $\nu\geq \nu_0$ using the matrices $\MM_{(m-1,\nu-1)}$.
\end{thm}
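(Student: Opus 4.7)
My plan is to derive this resolution from the approximation ($\Zc$-)complex of the sequence $\fb=(f_0,\ldots,f_3)$, the by-now-standard framework for implicitization problems initiated by Herzog-Simis-Vasconcelos. Recall that the $\Zc$-complex is a complex of bigraded free $R\otimes_k S$-modules built out of the Koszul cycles $Z_\bullet$ of $\fb$ over $R$, and that its $0$th homology is the symmetric algebra $\mathrm{Sym}_R(I)$ of $I=(\fb)$. Under the assumption that the base points are locally complete intersections, the kernel of the canonical surjection $\mathrm{Sym}_R(I)\twoheadrightarrow\mathrm{Rees}_R(I)$ is supported on $\Bc$, hence after restriction to any $R$-bidegree lying outside the saturation range of this torsion these two modules coincide as graded $S$-modules.

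Next I would restrict the $\Zc$-complex to the $R$-bidegree $(\mu-1,n-1)$-strand. This produces a bounded complex of free graded $S$-modules with terms $R_{(\mu-1,n-1)}\otimes_k S\simeq S^{\mu n}$ in degree zero, $(Z_1)_{(\mu-1,n-1)}\otimes_k S(-1)\simeq S(-1)^l$ in degree one, canonically matching the moving planes $V_{(\mu-1,n-1)}$, and $(Z_2)_{(\mu-1,n-1)}\otimes_k S(-2)$ in degree two. The natural exact sequence relating second-order Koszul cycles, Koszul boundaries, and the quotient $W_{(\mu-1,n-1)}/V'_{(\mu-1,n-1)}$ provides a (non-canonical) splitting of this last term as $S(-2)^q\oplus S(-2)^r$: the first summand carries the moving quadrics $Q_1,\ldots,Q_q$, while the second carries the second-order Koszul syzygies $f_ie_j-f_je_i$, whose image under the $\Zc$-differential $e_i\wedge e_j\mapsto x_ie_j-x_je_i$ is precisely what produces the linear entries of $\partial_2$. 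Matching these identifications with the definition of $\MM_{(\mu-1,n-1)}$ recovers the shape of $\partial_1$ and $\partial_2$ claimed in the theorem.

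The main obstacle is then to prove the acyclicity of this truncated complex for every $\mu\geq \mu_0$, including the vanishing of $(Z_3)_{(\mu-1,n-1)}\otimes_k S(-3)$. I would deduce it from a bigraded local cohomology vanishing for the cycle modules $Z_\bullet$. Along the $R_2$-direction, the standard Koszul-homology criterion for a codimension-two locally complete intersection ideal yields the required vanishing in $R_2$-degree $n-1$. Along the $R_1$-direction, the integer $\mu_0$ is by design the precise threshold above which the $R_1$-module $\mathrm{Syz}(\fb)_{\bullet,n-1}$ is generated by its elements of $R_1$-degree $\leq \mu_0$; so for $\mu\geq \mu_0$ the relevant $\mm_1$-local cohomology of $Z_1$ in $R_2$-degree $n-1$ vanishes in $R_1$-degree $\geq \mu-1$, and the analogous vanishings for $Z_2$ and $Z_3$ follow from a spectral sequence argument on a bigraded double complex. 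This local-cohomology bookkeeping is where I expect the bulk of the technical work to lie.

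Once acyclicity is secured, the $0$th homology of the complex is exactly $\mathrm{Rees}_R(I)_{(\mu-1,n-1)}$ by the first step, proving the resolution statement. The MacRae / determinant-of-a-free-complex formalism then shows that the determinant of the complex generates the $0$th Fitting ideal of $\coker(\partial_1)$; this Fitting ideal is annihilated by a power of the irreducible implicit equation $F$ of $\Sr$, and the exponent is forced to be one by the generic injectivity of $\phi$ combined with the bidegree-adapted degree identity $\deg\Sr = \mu n - \rank_S\coker(\partial_1)$ read off the Rees algebra. The companion statement for $\nu\geq \nu_0$ is obtained by swapping the roles of the two $\PP^1$ factors.
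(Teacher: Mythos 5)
Your overall framework (approximation complex, local cohomology vanishings, MacRae determinant, degree formula via generic injectivity) is the right one and matches the paper's strategy in spirit. But the mechanism you propose for producing the moving-quadric columns is wrong, and the error is structural, not cosmetic. You claim that the degree-two strand term $(\Zc_2)_{(\mu-1,n-1)}\simeq (Z_2)_{(\mu-1+2m,3n-1)}\otimes_k S(-2)$ splits as $S(-2)^q\oplus S(-2)^r$, with the $S(-2)^q$ summand carrying the moving quadrics $Q_1,\dots,Q_q$. The moving quadrics are not elements of $Z_2$: they live in the ideal $\Jc=\Jc\langle 1\rangle:\mm^\infty$ of $T=R\otimes_k S$. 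In the paper they are identified (Propositions \ref{prop:HmSI} and \ref{prop:J2/J1}) with the local cohomology $H^0_\mm(\Sc_I)_{(\mu-1,n-1)}\simeq H^2_\mm(\Zc_2)_{(\mu-1,n-1)}$, a \emph{torsion} module, not a free direct summand of $(\Zc_2)_{(\mu-1,n-1)}$. The ranks also do not work: the strand $(\Zc_\bullet)_{(\mu-1,n-1)}$ resolves $(\Sc_I)_{(\mu-1,n-1)}$, which has $S$-rank exactly $q$ (its torsion is free of rank $q$ and its quotient is torsion over $S$), forcing $\mathrm{rank}\,(\Zc_2)_{(\mu-1,n-1)}=l+q-\mu n=r$, not $q+r$. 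So the splitting you invoke cannot hold, and neither can the subsequent "shift" of the $S(-2)^q$ block down one homological degree.

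What the paper actually does is build a \emph{new} complex, distinct from the $\Zc$-strand: it keeps $(\Zc_2)_{(\mu-1,n-1)}$ and $(\Zc_0)_{(\mu-1,n-1)}$, and replaces the middle term by the direct sum $(\Zc_1)_{(\mu-1,n-1)}\oplus(\Jc\langle 2\rangle/\Jc\langle 1\rangle)_{(\mu-1,n-1)}$, with $\partial_2=(d^\xb_2,0)$ (hence the block of zero rows noted in the paper's proof) and $\partial_1=\MM_{(\mu-1,n-1)}$. For this to be a complex of \emph{free} $S$-modules, one must show that $H^0_\mm(\Sc_I)_{(\mu-1,n-1)}$ is free for $\mu\ge\mu_0$; that is the content of Proposition \ref{prop:J2/J1}, which in turn hinges on the vanishing $H^2_\mm(\Zc_1)_{(\mu-1,n-1)}=0$ coming from the threshold $\mu_0$ (Proposition \ref{prop:freeness}). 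Your plan omits both the freeness assertion and the augmentation step. Without the augmentation you are resolving $(\Sc_I)_{(\mu-1,n-1)}$, whose $S$-annihilator vanishes because it contains a free submodule, so the MacRae determinant you invoke in your last paragraph is not even defined. You must kill the torsion by adding it as a fresh free summand in homological degree one, prove it is free, and only then take determinants.
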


\noindent As we will see, when the map $\phi$ is not generically injective but only generically finite onto $\Sr$, both Theorem \ref{thm:intro1} and Theorem \ref{thm:quadres} still hold but the determinants yield an implicit equation of $\Sr$ raised to the power the degree of $\phi$. The presence of base points that are locally almost complete intersection, i.e.~that are locally defined by at most 3 generators, will also be considered. 

Finally, for the sake of completeness we mention that a variant of the method of moving quadrics for triangular surfaces, i.e.~for surfaces that are obtained as the closure of the image of a rational map from $\PP^2$ to $\PP^3$, has also been introduced in \cite{CGZ00} in the absence of base points. Then, it has been  further studied in \cite{BCD03} and \cite{BCS10} in the presence of base points. In particular, a result similar to Theorem \ref{thm:intro2} above is proved in \cite[Section 5]{BCS10} where a threshold integer $\mu_0$ is also defined for triangular surfaces (see \cite[Definition 1]{BCS10}). However, this threshold integer is much less explicit than the one we defined above for tensor-product surfaces and to the best of our knowledge, there is no result on  determinantal representations for triangular surfaces that is similar to Theorem \ref{thm:intro1} above.

\medskip

The paper is organized as follows. Section \ref{sec:prem} is devoted to some preliminaries on determinants and blowing-up algebras, more precisely the symmetric and the Rees algebras of certain bihomogeneous ideals in $R$. In Section \ref{sec:locSI} we analyze the local cohomology modules of the symmetric algebra $\Sc_I$ (Proposition \ref{prop:HmSI}) and as a first consequence we recover results by Botbol \cite{Bot11} about representations of $\Sr$ by means of matrices built solely from linear forms, in particular from moving planes following $\phi$ (Corollary \ref{cor:Bot}).  In Section \ref{sec:MQ-SI} we prove Theorem \ref{thm:botbol}, i.e.~that tensor product surfaces can always be represented as the ratio of two determinants, the one in the numerator being a maximal minor of a matrix $\MM_{(\mu,\nu)}$ of moving planes and quadrics following $\phi$ (Theorem \ref{thm:main}). Finally, in order to prove Theorem \ref{thm:intro1},  we investigate in Section \ref{sec:detformula} under which condition this ratio of determinants can be reduced to a single determinant and hence yields a determinantal representation (Theorem \ref{thm:quadres}). We close the paper by explaining how our results cover the previously known determinantal representations of tensor product surfaces and by providing illustrative examples.

\section{Preliminaries on blowing-up algebras}\label{sec:prem}

As already mentioned in the previous section, we suppose that a rational map $\phi$ is given:
$$\begin{array}{rlc}
\phi : \PP^1\times \PP^1 & \dasharrow & \Sr \subset \PP^3 \\
	(s_0:s_1)\times(t_0:t_1) & \mapsto & (f_0:f_1:f_2:f_3),
\end{array}
$$
where $f_0,f_1,f_2$ and $f_3$ are four bihomogeneous polynomials in $R=R_1\otimes_k R_2$ of bidegree $(m,n)$, $m,n\geq 1$.  Without loos in generality, one can assume that the base locus of $\phi$ defines finitely many points. We also assume that the closure of the image of $\phi$ is a surface $\Sr\subset \PP^3$. The base scheme of $\phi$ is the scheme $\Bc=V(I) \subset \PP^1\times \PP^1$ where we recall that $I$ is the bigraded ideal of $R$ generated by $\fb$. For each base point $p\in \Bc$ we denote by $d_p$ its multiplicity and by $e_p$ its Hilbert-Samuel multiplicity; it is well known  that $e_p\geq d_p$ and that equality holds if and only if $p$ is locally a complete intersection (abbreviated l.c.i.), i.e.~locally defined by 2 generators. Finally, we recall that (see  \cite[Proposition 4.4]{Fulton} and \cite[Appendix]{Cox01})
\begin{equation}\label{eq:degformula}
\deg(\phi)\deg(\Sr)=2mn-\sum_{p\in \Bc} e_p	
\end{equation}
where $\deg(\phi)$ stands for the degree of the generically finite map $\phi:\PP^1\times\PP^1 \dasharrow \Sr \subset \PP^3$.

\subsection{Determinants} Let $\MM$ be a matrix of an injective graded map $\rho$ between two free $S$-modules of the same rank $$0 \rightarrow F_1:=\oplus_{i=1}^lS(-e_i) \xrightarrow{\rho} F_0:=\oplus_{i=1}^l S(-d_i).$$
If $\det(\MM)$ is a defining equation of $\Sr$ then the $S$-module $Q:=\coker(\rho)$ has a finite free resolution of length one and is supported on $\Sr$, i.e.~$\ann_S(Q)=(F)$ where $F=0$ is a defining equation of $\Sc$. Conversely, the existence of such $S$-modules yields determinantal representations of $\Sr$, possibly raised to some power (see e.g.~\cite[\S 1]{Beauville00}, \cite{KM76} or \cite{North} where these modules are called elementary modules).

More generally, suppose given a graded $S$-module $M$ and a finite free resolution $F_\bullet$ of length $p\geq 1$:
$$ 0\rightarrow F_p \rightarrow F_{p-1} \rightarrow \cdots \rightarrow F_1 \rightarrow F_0 \rightarrow M \rightarrow 0.$$
If $\ann_S(M)\neq 0$, or equivalently if the Euler characteristic of $F_\bullet$ is equal to zero, then the complex $F_\bullet$ admits a determinant $\det(F_\bullet)$ which is supported on the codimension one components of $\ann_S(M)$ (see \cite{KM76} and \cite[Appendix A]{GKZ}; see also \cite{North} where these determinants are called MacRae's invariants). More precisely,
$$[\det(F_\bullet)]=\mathrm{div}_S(M)=\sum_{\stackrel{\qq \, \mathrm{prime},}{\codim_S(\qq)=1}}  \mathrm{length}_{S_\qq}(M_\qq)[\qq].$$

Thus, to obtain representations of the surface $\Sr$ as an alternate product of determinants of matrices, one may seek for free resolutions of graded $S$-modules that are supported on $\Sr$. Of course, free resolutions of length one are of particular interest as they yield determinantal representations. In this paper, we will consider modules that are built from the blowing-up of $\PP^1\times \PP^1$ along the base scheme $\Bc$ of the map $\phi$.

\subsection{The Rees algebra} Let $\Gamma \subset \PP^1\times \PP^1 \times \PP^3$ be the Zariski closure of the graph of the map $\phi : \PP^1\times \PP^1 \setminus \Bc \rightarrow \PP^3$. It is well known that $\pi(\Gamma)=\Sr$ where $\pi$ denotes the canonical projection of $\PP^1\times \PP^1 \times \PP^3$ on its third factor $\PP^3$, which is a projective morphism. Algebraically, one has
$$\Gamma=\Proj(\Rc_I)$$
where $\Rc_I:=R\oplus I \oplus I^2 \oplus \cdots=\oplus_{n\geq 0} I^n$ is the Rees algebra of the ideal $I$ in $R$ \cite[II.7]{Hart}. The embedding $\Gamma \subset \PP^1\times \PP^1 \times \PP^3$ corresponds to the graded map
\begin{eqnarray*}\label{eq:reesdef}
 R \otimes_k S= R[x_0,x_1,x_2,x_3] &  \xrightarrow{\beta} & \Rc_I \\
 x_i & \mapsto & f_i \in I^1
\end{eqnarray*}
and the homogeneous polynomials in $\Kc=\ker(\beta)$ are the defining equations of $\Rc_I$. It is important to notice that $\Rc_I$ is, as $\Gamma$, trigraded. For instance, the graded component of $\Kc$ of degree 1 with respect to the grading induced by $S$ is in correspondence with the module of syzygies of the polynomials $f_0,\ldots,f_3$, i.e.~the moving planes:
\begin{equation}\label{eq:K1}
g_0x_0+g_1x_1+g_2x_2+g_3x_3 \in \Kc_1 \Leftrightarrow g_0f_0+g_1f_1+g_2f_2+g_3f_3=0.
\end{equation}
Similarly, the graded component of $\Kc$ of degree 2 with respect to this grading correspond to the moving quadrics.

On the other hand, for any couples of integers $(\mu,\nu)$ let us denote by ${\Rc_I}_{(\mu,\nu)}$ the graded component of $\Rc_I$ with respect to its natural bigrading induced by $R=R_1\otimes_k R_2$. All these graded components are $S$-modules that are supported on $\Sr$; one has $\ann_S({\Rc_I}_{(\mu,\nu)})=(F)$ for any couples $(\mu,\nu)$ of non negative integers, where $F=0$ is a defining equation of $\Sr$, since $\Rc_I$ is a domain (see also \cite[Lemma 4.4]{Bot11}). As an extreme case, observe that ${\Rc_I}_{(0,0)}$ is isomorphic to $S/(F)$ and that it yields a $1\times 1$-matrix $\MM$ whose single entry is equal to $F$.

\subsection{Moving planes and the symmetric algebra}\label{subsec:MPSI}
Another interesting blowing-up algebra attached to the ideal $I$ is its symmetric algebra $\Sc_I$. It is simpler than the Rees algebra as it is defined by the moving planes following $\phi$. More precisely, $\Sc_I\simeq R[x_0,\ldots,x_3]/\Jc\la 1\ra$ where $\Jc\la 1\ra$ is the ideal generated by elements in $\Kc_1$ \eqref{eq:K1}.
The canonical surjective map $\Sc_I \rightarrow \Rc_I$ corresponds to the inclusion
$$\Gamma \subset \Gamma':=\Proj(\Sc_I)\subset \PP^1\times \PP^1\times \PP^3.$$
If all the base points are l.c.i.~then $\Gamma=\Gamma'$ and hence $\pi(\Gamma')=\Sr$ (see for instance \cite[Theorem 5.5 and Lemma 5.6]{Bot11}). Algebraically, this equality means that the prime ideal $\Kc$ is the saturation of $\Jc\la 1\ra$ with respect to the product of ideals $\mm=\mm_1\mm_2$, where $\mm_1=(s_0,s_1)$ and $\mm_2=(t_0,t_1)$, i.e.~$\Kc = \Jc\la 1 \ra : \mm^\infty$. We notice that if some base points are almost local complete intersection (abbreviated a.l.c.i.), i.e.~locally defined by 3 generators, then $\Gamma'$ has some extra components with respect to $\Gamma$: for each a.l.c.i.~base point $p$ a linear form $L_p \in S$ appears in $\Gamma'$ with multiplicity $e_p-d_p$ (see \cite{BCJ09} and \cite[Lemma 5.6]{Bot11}).

Similarly to the Rees algebra $\Rc_I$, the symmetric algebra $\Sc_I$ is also trigraded and its graded components ${\Sc_I}_{(\mu,\nu)}$ with respect to the bigrading of $R$ yields graded $S$-modules. Assuming that all the base points are l.c.i.~these modules are supported on $\Sr$ providing the graded components of $\Kc$ and $\Jc\langle 1\rangle$ are equal in degree $(\mu,\nu)$, that is to say providing $H^0_\mm(\Sc_I)_{(\mu,\nu)}=0$.

The main reason why we are  interested in the symmetric algebra $\Sc_I$ is because graded free resolutions of ${\Sc_I}_{(\mu,\nu)}$ can be built under suitable assumptions.

\subsection{The approximation complex of cycles} Consider the two sequences $\fb=(f_0,\ldots,f_3)$ and $\xb=(x_0,\ldots,x_3)$ of 
elements in $T=R\otimes_k S= R[\xb]$ and their corresponding homological Koszul complexes $K_\bullet(\fb;T)$ and $K_\bullet(\xb;T)$. They both have the same modules $K_p\simeq \wedge^p(T^4)$. We denote by $Z_p(\fb;T)$ and $Z_p(\xb;T)$ their cycles and by $d_\bullet^\fb$ and $d_\bullet^\xb$ their differentials, respectively. One has $d_{p-1}^{\fb}\circ d_{p}^\xb+d_{p-1}^\xb\circ d_{p}^\fb=0$ and hence for any $p$
$$ d_p^\xb ( Z_p(\fb;T)) \subset Z_{p-1}(\fb;T).$$
The complex $\Zc_\bullet$, whose modules are $Z_p(\fb;T)$ and differentials are $d_\bullet^\xb$, is called the approximation complex of cycles associated to $\fb$. It has been introduced in \cite{HSV82,HSV83} to study properties of Rees and symmetric algebras, in particular their Cohen-Macaulayness. More recently, these complexes have been used for analyzing images of rational maps (see e.g. \cite{BuJo03,Bot11,BBC14}). Among their numerous properties, for our purpose it is important to emphasize the following ones. First $Z_p(\fb;T)=Z_p(\fb;R)\otimes T$, which means that the modules of $\Zc_\bullet$ are essentially the cycles of the Koszul complex $K_\bullet(\fb;R)$. In addition, $Z_0(\fb;R)=R$ and $Z_1(\fb;R)$ is the first module of syzygies of $\fb$, therefore $H_0(\Zc_\bullet)\simeq \Sc_I$. The homology modules $H_i(\Zc_I)$ are $\Sc_I$-modules and they only depend on the ideal $I \subset R$ up to isomorphism.

The complex $\Zc_\bullet$ is naturally trigraded: a bigrading is induced by $R$ and a standard one by $S$. In order to have the differentials $d_\bullet^\xb$ of degree 0, the modules $\Zc_p$ are graded by
\begin{equation}\label{eq:Zc}
\Zc_p\simeq Z_p[(pm,pn)] \otimes S(-p)	
\end{equation}
where the shifts in gradings $[-]$ and $(-)$ are with respect to the grading induced by $R$ and $S$, respectively. We notice that the modules $Z_p$ are canonically graded by the standard grading of the (homological) Koszul complex $K_\bullet(\fb;T)$. Thus, the graded piece of bidegree $(\mu,\nu)$ (with respect to the grading of $R$) of $\Zc_\bullet$ is a graded complex of free $S$-modules of the form
$${\Zc_\bullet}_{(\mu,\nu)} \ : \ \cdots \rightarrow {Z_2}_{(\mu,\nu)+2(m,n)}\otimes_k S(-2) \xrightarrow{d_2^\xb}
 {Z_1}_{(\mu,\nu)+(m,n)}\otimes_k S(-1) \xrightarrow{d_1^\xb} R_{(\mu,\nu)}\otimes_k S.$$
The maps in this complex are given by matrices whose entries are linear forms in $S$. More specifically, the columns of a matrix of $d_1^\xb$ are filled with moving planes of $\phi$ of bidegree $(\mu,\nu)$. Moreover, under appropriate assumptions, its determinant yields a defining equation of $\Sr$.

\begin{thm}[\cite{Bot11}]\label{thm:botbol} Let $\mathscr{R}$ be the following subset of $\ZZ^2$
	\begin{multline}\label{eq:areaR}
	\mathscr{R}:=\{ (\mu,\nu)\in \ZZ^2 : \mu\leq 2m-2, \nu\leq 2n-2 \}\cup 	 \\
	\{ (\mu,\nu)\in \ZZ^2 : \mu\geq m,  \nu\leq n-2 \}\cup 	
	\{ (\mu,\nu)\in \ZZ^2 : \mu\leq m-2,  \nu\geq n \}.
	\end{multline}
	Then, for any $(\mu,\nu)\notin \Rc$ the complex $(\Zc_\bullet)_{(\mu,\nu)}$ is acyclic. Moreover, if all the base points of $\phi$ are l.c.i.~then the determinant of $(\Zc_\bullet)_{(\mu,\nu)}$ is equal to $F^{\deg(\phi)}$ where $F=0$ is a defining equation of $\Sr$.
\end{thm}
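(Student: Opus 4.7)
The plan is to control the homology of $(\Zc_\bullet)_{(\mu,\nu)}$ via a hypercohomology argument comparing it with the bigraded local cohomology of $R$ with respect to $\mm=\mm_1\mm_2$, and then to read off the determinant formula from MacRae's invariant together with the identification $H_0(\Zc_\bullet) \cong \Sc_I$.

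For the acyclicity, I would form the double complex $\check{C}^\bullet_\mm(\Zc_\bullet)$ obtained by applying the bigraded \v{C}ech complex to $\Zc_\bullet$ and exploit its two hypercohomology spectral sequences, with $E_2$-pages $H^i_\mm(H_p(\Zc_\bullet))$ and $H_p(H^i_\mm(\Zc_\bullet))$. The key technical input is the vanishing of the bigraded local cohomology $H^i_\mm(Z_p)$ for $i \geq 1$ in the relevant bidegrees. Using the standard short exact sequences $0 \to Z_p \to K_p \to B_{p-1} \to 0$ and $0 \to B_p \to Z_p \to H_p(\fb;R) \to 0$ from the Koszul complex $K_\bullet(\fb;R)$, this reduces to two ingredients: the classical bigraded local cohomology $H^\bullet_\mm(R)$ of $R = R_1 \otimes_k R_2$, whose only nonzero modules (by K\"unneth and Mayer-Vietoris) are $H^2_{\mm_1}(R)$, supported only where $\mu \leq -2$, $H^2_{\mm_2}(R)$, supported only where $\nu \leq -2$, and $H^3_\mm(R)$, supported only when both inequalities hold; and the local cohomology of the Koszul homology $H_p(\fb;R)$, whose support is controlled by the finite base scheme $\Bc$. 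Tracking the bidegree shifts $p(m,n)$ in \eqref{eq:Zc} for $p \in \{1,2,3\}$, these three types of local cohomology combine into exactly the complement of $\mathscr{R}$ in \eqref{eq:areaR}, so that for $(\mu,\nu) \notin \mathscr{R}$ the second spectral sequence forces $H_p(\Zc_\bullet)_{(\mu,\nu)} = 0$ for $p \geq 1$, which is the claimed acyclicity.

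For the determinant, under the l.c.i.\ hypothesis $\Kc$ is the $\mm$-saturation of $\Jc\la 1\ra$ (Subsection \ref{subsec:MPSI}), so there is a short exact sequence $0 \to H^0_\mm(\Sc_I)_{(\mu,\nu)} \to (\Sc_I)_{(\mu,\nu)} \to (\Rc_I)_{(\mu,\nu)} \to 0$. Applying MacRae's invariant to the exact resolution $(\Zc_\bullet)_{(\mu,\nu)}$ of $(\Sc_I)_{(\mu,\nu)}$ and using additivity of divisors on short exact sequences together with the fact that the $\mm$-torsion module $H^0_\mm(\Sc_I)_{(\mu,\nu)}$ contributes nothing at the codimension-one prime $(F)$, I obtain
$$[\det(\Zc_\bullet)_{(\mu,\nu)}] = \mathrm{div}_S((\Rc_I)_{(\mu,\nu)}).$$
Since $\Rc_I$ is a domain with $\ann_S(\Rc_I) = (F)$, localizing at $(F)$ yields a length equal to the degree of the generic fiber of the projection $\Gamma \to \Sr$, namely $\deg(\phi)$; hence the determinant equals $F^{\deg(\phi)}$ up to a unit in $k$.

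The main obstacle I anticipate is the combinatorial bookkeeping needed to identify the three pieces of $\mathscr{R}$ exactly: one must carefully track how the contributions from $H_p(\fb;R)$ interact with those from $H^\bullet_\mm(R)$ across the homological degrees $p=1,2,3$, so that the boundary of $\mathscr{R}$ comes out as in \eqref{eq:areaR}, and in particular verify that no spurious additional vanishing or failure of vanishing occurs on the asserted non-acyclicity region.
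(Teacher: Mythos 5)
Your spectral-sequence argument for acyclicity follows the same route as the paper: the double complex $\Cr^\bullet_\mm(\Zc_\bullet)$, comparison of its two filtrations, reduction to $H^\bullet_\mm(Z_p)$ via the Koszul exact sequences, and finally bookkeeping in \eqref{eq:loc-coho} and the shifts \eqref{eq:Zc}. That part is sound and matches Lemmas \ref{lem:HmHp}, \ref{lem:HmZp} and Proposition \ref{prop:HmSI}.

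The determinant argument, however, has a genuine gap. The paper does not argue that the torsion ``contributes nothing at $(F)$''; it proves that for $(\mu,\nu)\notin\Rr$ one has $H^0_\mm(\Sc_I)_{(\mu,\nu)}=0$, so that $(\Zc_\bullet)_{(\mu,\nu)}$ is in fact a free resolution of the graded piece $(\Rc_I)_{(\mu,\nu)}$ of the \emph{Rees} algebra, a domain with $\ann_S=(F)$. Your additivity-of-divisors step presupposes that all three modules in the short exact sequence $0\to H^0_\mm(\Sc_I)_{(\mu,\nu)}\to(\Sc_I)_{(\mu,\nu)}\to(\Rc_I)_{(\mu,\nu)}\to 0$ have nonzero $S$-annihilator, so that $\mathrm{div}_S$ is even defined. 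But $\mm$-torsion is torsion over $R$, not over $S$: when $H^0_\mm(\Sc_I)_{(\mu,\nu)}$ is nonzero it is in fact a \emph{free} $S$-module (this is the whole point of Proposition \ref{prop:J2/J1}), hence it is supported everywhere, including at $(F)$, with $\ann_S=0$. In that case neither $\mathrm{div}_S(H^0_\mm(\Sc_I)_{(\mu,\nu)})$ nor $\mathrm{div}_S((\Sc_I)_{(\mu,\nu)})$ is defined, the Euler characteristic of the complex is positive, and no determinant can be extracted. So the statement you invoke is false in general, and would be circular if restricted to the case at hand. The missing step — and what the paper actually establishes, by showing $H^2_\mm(\Zc_2)_{(\mu,\nu)}=0$ for $(\mu,\nu)\notin\Rr$ and invoking Proposition \ref{prop:HmSI} — is precisely the vanishing $H^0_\mm(\Sc_I)_{(\mu,\nu)}=0$ outside $\Rr$; once you have that, $(\Sc_I)_{(\mu,\nu)}=(\Rc_I)_{(\mu,\nu)}$ and the rest of your computation of $\mathrm{div}_S$ at $(F)$ goes through.
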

We notice that the subset $\Rc$ corresponds to the union of light and dark grey areas illustrated in Figure \ref{fig:domains}. Theorem \ref{thm:botbol} rarely provides determinantal formula \cite[Lemma 7.3]{Bot11}. A case where this happens is when the map $\phi$ has no base point. Under this assumption the complex $(\Zc_\bullet)_{(2m-1,n-1)}$ has length one and it recovers the Dixon's $(2mn\times 2mn)$-matrix of linear forms in $S$ (see also \cite{CGZ00}). We will recover this result as well as a proof of Theorem \ref{thm:botbol} in our setting.

\begin{figure}[h!]
  \includegraphics[width=0.6\linewidth]{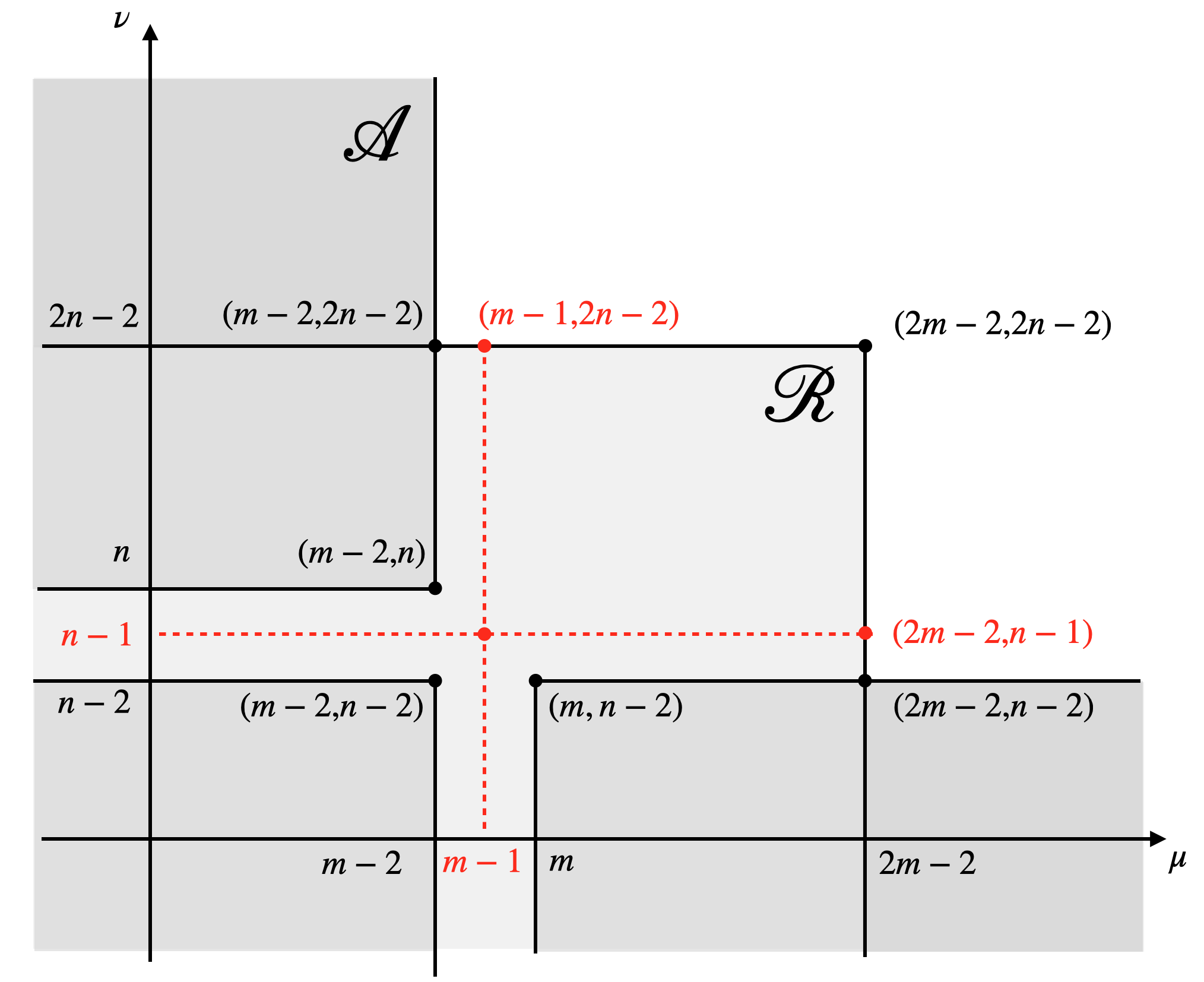}
  \caption{The subsets $\Rc$ (light and dark grey areas) and $\Ar$ (dark grey areas) of $\ZZ^2$, as defined by \eqref{eq:areaR} and \eqref{eq:areaA} respectively, are illustrated. The dashed red lines correspond to bidegrees $(\mu,\nu)$ where some new determinantal representations of the surface $\Sr$ are obtained by introducing quadratic relations.}  
  \label{fig:domains}
\end{figure}

\subsection{Torsion of the symmetric algebra}\label{subsec:torSI}

In order to get new determinantal representations of $\Sr$ we aim to build more compact free resolutions than those obtained in Theorem \ref{thm:botbol}. For that purpose, following ideas introduced in \cite{BCS10} we will consider the symmetric algebra of $I$ modulo its $\mm$-torsion. More precisely, we will consider the algebra $\Sc_I^*:=\Sc_I/H^0_\mm(\Sc_I)$ which is an intermediate image of $\Sc_I$ in the way to get the Rees algebra $\Rc_I$. We have $\Sc_I^*\simeq T/\Jc$ where $\Jc:=\Jc\la 1\ra :\mm^\infty$. Thus, we will insert moving quadrics in some complexes $(\Zc_\bullet)_{(\mu,\nu)}$ by considering the ideal $\Jc\la 2\ra$ which is generated by elements in $\Jc$ of degree at most 2 in the $x_i$'s.

\subsection*{Assumption.} As we will use the algebras $\Sc_I$ and $\Sc_I^*$ in place of the Rees algebra, we need to assume that $\pi(\Gamma')$ is a proper hypersurface in $\PP^3$. So, \emph{in what follows we will always assume that the base points of $\phi$ are all locally generated by at most three generators.}

\section{Local cohomology modules of the symmetric algebra}\label{sec:locSI}

The moving planes following $\phi$ generate the defining ideal $\Jc\la 1\ra\subset T$ of the symmetric algebra $\Sc_I$ and can thus be studied through the properties of this algebra \cite{BuJo03,Bot11}. For their part, the moving quadrics following $\phi$, i.e.~the polynomials in $\Kc$ of degree 2 in the $x_i$'s, are more difficult to analyze. Our main strategy is to study the quotient $\Jc\la 2\ra/\Jc\la 1\ra$ that corresponds to moving quadrics that are not generated by moving planes (we notice that if some base points are a.l.c.i.~then these are not necessarily all the moving quadrics). As this quotient is naturally connected to the study of the local cohomology module $H^0_\mm(\Sc_I)=\Jc/ \Jc\la 1\ra$, this section is devoted to an analysis of the local cohomology modules of $\Sc_I$ in our setting, as well as the corresponding approximation complex of cycles $\Zc_\bullet$.

\medskip

The complex $\Zc_\bullet$ is by definition deeply connected to the Koszul complex $K_\bullet(\fb;T)$, so we begin with two preliminary lemmas on Koszul cycles and homology modules. We denote by $Z_p, B_p$ and $H_p$ the Koszul cycles, boundaries and homology modules respectively. We also remind that the local cohomology modules of the bigraded ring $R=R_1\otimes R_2$ with respect to $\mm$ are such that (see for instance \cite[\S 6]{Bot11}): 
\begin{eqnarray}\label{eq:loc-coho}
& H^i_\mm(R)=0 \textrm{ for any } i\notin  \{2,3\}, \\ \nonumber
& H^2_\mm(R)\simeq \left(\frac{1}{s_0 s_1}k[s_0^{-1},s_1^{-1}]\otimes R_2 \right) \oplus \left( R_1\otimes\frac{1}{t_0 t_1}k[t_0^{-1},t_1^{-1}]\right),\\  \nonumber
& H^3_\mm(R)\simeq \frac{1}{s_0s_1t_0t_1}k[s_0^{-1},s_1^{-1},t_0^{-1},t_1^{-1}].	
\end{eqnarray}
Therefore, since  $K_p$ is a free bigraded $R$-module for any $p$,
 $$K_p\simeq \wedge^p \left(\oplus_{i=1}^4 R(-(m,n))\right)\simeq \oplus_{i=1}^{\binom{4}{p}} R(-pm,-pn)),$$
 the local cohomology modules $H^i_\mm(K_p)$ are easily deduced from $H^i_\mm(R)$ for any $i$.

\begin{lem}\label{lem:HmHp} We have $H_4=H_3=0$, $H^i_\mm(H_p)=0$ for any $i>1$. Moreover, for $j=0,1$ and for any $p$ we have
	$$H^i_\mm(H_p)_{(\mu,\nu)}=0 \ \textrm{ for any } (\mu,\nu) \textrm{ such that }
	H^2_\mm(K_{p+2-i})_{(\mu,\nu)}=	H^3_\mm(K_{p+3-i})_{(\mu,\nu)}=0.$$
\end{lem}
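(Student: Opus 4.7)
The plan is to establish the three claims in order, using the two convergent spectral sequences of a single double complex to handle the local cohomology statements.

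First I would prove $H_4=H_3=0$ via a grade computation. The top Koszul homology is $H_4=\ann_R(\fb)=0$ since $R$ is a domain and $\fb\neq 0$. For $H_3$, I would show $\mathrm{grade}(I,R)=2$: on the one hand, each $f_i$ has bidegree $(m,n)$ with $m,n\geq 1$ and so lies in $\mm_1\cap\mm_2$, which already forces $\codim I\leq 2$; on the other hand, two general $k$-linear combinations of $\fb$ form a regular sequence in the Cohen--Macaulay ring $R$ by a dimension count, giving $\mathrm{grade}(I)\geq 2$. Hence the Koszul complex on four elements generating an ideal of grade $2$ has $H_p=0$ for every $p>4-2=2$.

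Next I would form the double complex $C^\bullet_\mm(K_\bullet(\fb;R))$, with Čech cohomological index $p$ and Koszul homological index $q$, and exploit the two spectral sequences abutting to the same hypercohomology $\mathbb{H}^\bullet$: taking Čech cohomology first yields $E_2^{p,q}=H_q(\fb;H^p_\mm(R))$, and taking Koszul homology first yields $E_2^{p,q}=H^p_\mm(H_q)$. By \eqref{eq:loc-coho}, the first is concentrated in the two rows $p\in\{2,3\}$. Hence each bigraded piece $\mathbb{H}^n_{(\mu,\nu)}$ is controlled by the two entries $H_{p'-n}(\fb;H^{p'}_\mm(R))_{(\mu,\nu)}$ for $p'\in\{2,3\}$, each of which is a subquotient of the free-module piece $H^{p'}_\mm(K_{p'-n})_{(\mu,\nu)}$ obtained by tensoring $H^{p'}_\mm(R)$ with $K_{p'-n}$.

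For the uniform vanishing $H^i_\mm(H_p)=0$ with $i>1$: $H^3_\mm(H_p)=0$ follows from Grothendieck vanishing, since $\dim H_p\leq\dim V(I)=2$; for $H^2_\mm(H_p)=0$, I would localize at the generic points $\mm_1$ and $\mm_2$ of $V(\mm)$, where the $f_i$'s generically form a regular system of parameters in a $2$-dimensional regular local ring, making $H_p$ finite length and so killing its higher local cohomology at each generic point; combined with a chase on the second spectral sequence this yields the global vanishing. For the refined bigraded statement, the two hypotheses precisely annihilate the two entries of the first spectral sequence that contribute to $\mathbb{H}^{i-p}_{(\mu,\nu)}$, so the abutment vanishes in that bidegree; the uniform vanishing just established eliminates every entry of the second spectral sequence with $p\geq 2$, leaving $H^i_\mm(H_p)_{(\mu,\nu)}$ as essentially the only contribution on the relevant diagonal, and therefore forcing it to vanish.

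The main obstacle I expect is handling the $d_2$-differential of the second spectral sequence, which can connect $H^1_\mm(H_2)$ to $H^0_\mm(H_0)$ in the same bidegree and is the only possibly nonzero higher differential after the previous steps. Because the refined hypothesis for a given $(i,p)$ controls only specific Koszul indices, I would close the argument by feeding in vanishings coming from other pairs, taking advantage of the fact that the hypotheses for several $(i,p)$ on the same diagonal coincide (for instance $(0,0)$ and $(1,1)$ share a hypothesis, as do $(0,1)$ and $(1,2)$), so that one can kill $d_2$ case by case.
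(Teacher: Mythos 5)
Your plan follows the same route as the paper: compare the two spectral sequences of the double complex $\Cr^\bullet_\mm(K_\bullet)$ --- one with $E_2$ page $H^i_\mm(H_p)$, the other with $E_1$ page $H^i_\mm(K_p)$ --- and exploit that the latter is confined to rows $i\in\{2,3\}$ by \eqref{eq:loc-coho}. Two steps, however, need repair. First, your argument for the uniform vanishing $H^2_\mm(H_p)=0$ does not work as stated: local cohomology with respect to $\mm=\mm_1\cap\mm_2$ is a global invariant that also sees the closed point $\mm_1+\mm_2$, so it is not determined by localization at the generic points $\mm_1,\mm_2$ of $V(\mm)$; moreover $(H_p)_{\mm_1}$ need not be of finite length since $IR_{\mm_1}$ need not be $\mm_1R_{\mm_1}$-primary. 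The correct argument, which is what the paper's phrase ``$H_p$ is supported in $\Bc$'' encodes, is the Serre--Grothendieck correspondence on $\PP^1\times\PP^1$: for $i\geq 2$ one has $H^i_\mm(M)_{(\mu,\nu)}\simeq H^{i-1}(\PP^1\times\PP^1,\widetilde{M}(\mu,\nu))$, and since $\widetilde{H_p}$ is supported on the $0$-dimensional scheme $\Bc$, its higher sheaf cohomology vanishes, giving $H^i_\mm(H_p)=0$ for $i\geq 2$ directly.

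Second, the $d_2$-differential you flag as a potential obstacle cannot be nonzero. In the spectral sequence with $E_2^{i,p}=H^i_\mm(H_p)$, filtered by the \v{C}ech index, $d_2$ raises the \v{C}ech index by $2$ (and the Koszul index by $1$). A map $H^1_\mm(H_2)\to H^0_\mm(H_0)$ would lower the \v{C}ech index, so it is not a $d_2$ of this spectral sequence; and once $H^i_\mm(H_p)=0$ for $i\geq 2$ is established, every $d_2$ has zero source or zero target, so the spectral sequence degenerates at $E_2$ outright --- there is no case analysis to carry out. With these repairs your argument is exactly the paper's: the only possibly nonzero $E_1$-entries of the other spectral sequence on the anti-diagonal through $(i,p)$ are $H^2_\mm(K_{p+2-i})$ and $H^3_\mm(K_{p+3-i})$, so their vanishing in bidegree $(\mu,\nu)$ kills the abutment there, and $H^i_\mm(H_p)_{(\mu,\nu)}$, being an $E_\infty=E_2$ term and hence a subquotient of that abutment, must vanish.
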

\begin{proof} Since $I$ has depth 2 we deduce that $H_4=H_3$. Moreover, the homology modules $H_p$ are supported in $\Bc$ for any $p$, so their local cohomology modules $H^i_\mm(H_p)$ vanish for any  $i>1$.
	
	Now, consider the double complex $\Cr^\bullet_\mm(K_\bullet)$ obtained by replacing each module of the Koszul complex $K_\bullet$ by its associated Cech complex. The row-filtered spectral sequence associated to $\Cr^\bullet_\mm(K_\bullet)$ converge at the second step and is of the form
	\[
	\xymatrixrowsep{0.2em}
	\xymatrixcolsep{0.2em}
	\xymatrix{
	{0} & {0} & H^0_\mm({H_{2}})& H^0_\mm(H_1) & H^0_\mm ({H_{0}}) \\
	{0} & {0} & H^1_\mm({H_{2}})& H^1_\mm(H_1) & H^1_\mm({H_{0}})  \\
	{0} & {0} & 0& 0 &0\\
	{0} & {0} & 0& 0 &0\\
	}
	\]
On the other hand, the modules of the column-filtered spectral sequence at the first page are the local cohomology modules $H^p_\mm(K_q)$. Since these two spectral sequences have the same abutment, we deduce the claimed property.   	
\end{proof}

\begin{lem}\label{lem:HmZp} The following properties hold:
	\begin{itemize}
		\item[i)] $H^0_\mm(Z_p)=H^1_\mm(Z_p)$ for any $p$,
		\item[ii)] $H^i_\mm(Z_3)\simeq H^i_\mm(K_4)$ for any $i$,
	\end{itemize}	
	and for $p=1,2$,
	\begin{itemize}
		\item[iii)] $H^2_\mm(Z_p)_{(\mu,\nu)}\simeq H^0_\mm(H_{p-1})_{(\mu,\nu)}$ for any $(\mu,\nu)$ such that $H^2_\mm(K_p)_{(\mu,\nu)}=0$,
		\item[iv)] $H^3_\mm(Z_p)_{(\mu,\nu)}=0$ for any $(\mu,\nu)$ such that $$H^2_\mm(K_{p-1})_{(\mu,\nu)}=H^2_\mm(K_p)_{(\mu,\nu)}=H^3_\mm(K_{p+1})_{(\mu,\nu)}=0.$$		
	\end{itemize}
\end{lem}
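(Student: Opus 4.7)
The plan is to derive each statement by chasing the long exact sequences in local cohomology attached to the two short exact sequences of $R$-modules
$$\Sigma_p\colon 0\to Z_p\to K_p\to B_{p-1}\to 0,\qquad \Sigma'_p\colon 0\to B_p\to Z_p\to H_p\to 0,$$
combined with two structural inputs. First, each $K_p$ is a twist of a free $R$-module, so the vanishings $H^0_\mm(R)=H^1_\mm(R)=0$ recorded in \eqref{eq:loc-coho} force $H^0_\mm(K_p)=H^1_\mm(K_p)=0$ for every $p$. Second, Lemma \ref{lem:HmHp} supplies the unconditional vanishing $H^i_\mm(H_p)=0$ for all $i\geq 2$ and all $p$.

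For (i), the long exact sequence of $\Sigma_p$ embeds $H^0_\mm(Z_p)$ into $H^0_\mm(K_p)=0$ and produces an isomorphism $H^1_\mm(Z_p)\simeq H^0_\mm(B_{p-1})$. Feeding this into $\Sigma'_{p-1}$ embeds $H^0_\mm(B_{p-1})$ into $H^0_\mm(Z_{p-1})$, which itself vanishes by induction on $p$ (with the base case $Z_0=R$ immediate); both groups are therefore zero, whence the claimed equality. For (ii), I would exploit $H_3=H_4=0$ from Lemma \ref{lem:HmHp} and $B_4=0$ (from $K_5=0$) to conclude $Z_4=0$; then $\Sigma_4$ reduces to $K_4\simeq B_3$ and $\Sigma'_3$ reduces to $B_3\simeq Z_3$, yielding $Z_3\simeq K_4$ as $R$-modules and hence the statement after applying $H^i_\mm$.

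For (iii), the fragment $H^1_\mm(K_p)\to H^1_\mm(B_{p-1})\to H^2_\mm(Z_p)\to H^2_\mm(K_p)$ of $\Sigma_p$ combined with the hypothesis gives $H^2_\mm(Z_p)_{(\mu,\nu)}\simeq H^1_\mm(B_{p-1})_{(\mu,\nu)}$, while the fragment $H^0_\mm(Z_{p-1})\to H^0_\mm(H_{p-1})\to H^1_\mm(B_{p-1})\to H^1_\mm(Z_{p-1})$ of $\Sigma'_{p-1}$, together with (i), produces $H^1_\mm(B_{p-1})\simeq H^0_\mm(H_{p-1})$; composition gives the claim. For (iv), the fragment $H^2_\mm(H_p)\to H^3_\mm(B_p)\to H^3_\mm(Z_p)\to H^3_\mm(H_p)$ of $\Sigma'_p$ forces $H^3_\mm(Z_p)\simeq H^3_\mm(B_p)$, and the tail of the long exact sequence of $\Sigma_{p+1}$ exhibits $H^3_\mm(B_p)$ as a quotient of $H^3_\mm(K_{p+1})$, so the hypothesis on $H^3_\mm(K_{p+1})_{(\mu,\nu)}$ suffices to conclude; the remaining hypotheses on $H^2_\mm(K_{p-1})$ and $H^2_\mm(K_p)$ arise naturally as the conditions ensuring that (iii) applies in the same bidegree, and they also appear if one instead runs the dual chase through $\Sigma_p$ and $\Sigma'_{p-1}$.

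The main obstacle is purely combinatorial: each claim requires a distinct excerpt of the two long exact sequences, and one must keep careful track of which vanishings (from freeness of $K_\bullet$, from Lemma \ref{lem:HmHp}, or from an induction on earlier parts) are invoked at each node, together with the precise bidegree shifts. Consequently, the four parts must be executed in the order (i), (ii), (iii), (iv), so that later parts can feed on the vanishings established earlier.
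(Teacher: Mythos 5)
Your proof is correct, and it does take a genuinely different route from the paper. The paper considers the truncated complex $0\to Z_p\hookrightarrow K_p\to\cdots\to K_0$, forms the double Cech complex $\Cr^\bullet_\mm(\Kc_\bullet)$, and extracts all four statements at once by comparing the row-filtered and column-filtered spectral sequences. You instead decompose the Koszul complex into the two families of short exact sequences $\Sigma_p\colon 0\to Z_p\to K_p\to B_{p-1}\to 0$ and $\Sigma'_p\colon 0\to B_p\to Z_p\to H_p\to 0$ and chase the resulting long exact sequences in $H^\bullet_\mm$, inducting on $p$ for (i). These are of course two packagings of the same homological information, but they are operationally distinct: the spectral sequence does the bookkeeping for you in one pass, while your step-by-step chase makes each dependency visible and is arguably easier to check line by line.

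Two remarks. First, for part (iv) your chain $\Sigma'_p$ then $\Sigma_{p+1}$ actually yields a sharper statement: $H^3_\mm(Z_p)_{(\mu,\nu)}=0$ already follows from $H^3_\mm(K_{p+1})_{(\mu,\nu)}=0$ alone, whereas the paper's spectral-sequence accounting naturally produces all three hypotheses (via conditions on $H^3_\mm(K_p)$, $H^2_\mm(K_{p-1})$, $H^1_\mm(H_{p-1})$ and $H^0_\mm(H_{p-2})$, then converted through Lemma \ref{lem:HmHp}). You flag this yourself, which is good; it is worth saying explicitly that you are proving something stronger than what is stated. Second, in (iv) your claim that the tail of the long exact sequence of $\Sigma_{p+1}$ exhibits $H^3_\mm(B_p)$ as a quotient of $H^3_\mm(K_{p+1})$ silently uses $H^4_\mm(Z_{p+1})=0$, i.e.\ that the cohomological dimension of $\mm=\mm_1\mm_2$ is $\leq 3$ on all $R$-modules. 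This is true (e.g.\ by Mayer--Vietoris with $\mm_1$, $\mm_2$ and $\mm_1+\mm_2$, using that each $\mm_i$ is $2$-generated), and the paper relies on it equally tacitly when it displays spectral sequence pages with no fourth row, but since you are writing a self-contained chase you should state and justify this vanishing rather than leave it implicit.
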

\begin{proof} The ideal $I$ has depth 2 so $H_4=Z_4=0$ and $H_3=0$, which implies that $Z_3\simeq K_4$. Therefore, we only have to consider $Z_p$ for $p=1,2$.
	Consider the following truncation of the Koszul complex
	$$ \Kc_\bullet \, : \,  0 \rightarrow Z_p \hookrightarrow K_p \rightarrow \cdots \rightarrow K_0$$
and the double complex $\Cr^\bullet_\mm(\Kc_\bullet)$ obtained by replacing each module of $\Kc_\bullet$ by its corresponding Cech complex.
Its row-filtered spectral sequence converges at the second step and is of the form
	\[
	\xymatrixrowsep{0.2em}
	\xymatrixcolsep{0.2em}
	\xymatrix{
	{0} & {0} & H^0_\mm({H_{p-1}})& \cdots & H^0_\mm ({H_{0}}) \\
	{0} & {0} & H^1_\mm({H_{p-1}})& \cdots & H^1_\mm({H_{0}})  \\
	{0} & {0} & 0& \cdots &0\\
	{0} & {0} & 0& \cdots &0\\
	}
	\]
because the homology modules $H_i$ are supported in $\Bc$, which is zero-dimensional, for any $i$.
On the other hand, the first page of its column-filtered spectral sequence is of the form
	$$
	\xymatrixrowsep{0.2em}
	\xymatrixcolsep{1em}
	\xymatrix{
	H^0_\mm(Z_p) &  0  & 0 &   0 \\
	H^1_\mm(Z_p) &  0  &  0 &   0 \\
	H^2_\mm(Z_p) \ar[r] & H^2_\mm(K_p)  \ar[r]&\cdots  \ar[r]&  H^2_\mm(K_0) \\
	H^3_\mm(Z_p) \ar[r] & H^3_\mm(K_p)  \ar[r]&\cdots \ar[r] &  H^3_\mm(K_0) \\}
	$$
Since these two spectral sequences have the same abutment, we deduce that $H^0_\mm(Z_p)=H^1_\mm(Z_p)=0$, that
$H^2_\mm(Z_p)_{(\mu,\nu)}\simeq H^0_\mm(H_{p-1})_{(\mu,\nu)}$ for any $(\mu,\nu)$ such that $H^2_\mm(K_p)_{(\mu,\nu)}=0$ and also that $H^3_\mm(Z_p)_{(\mu,\nu)}=0$ for any ${(\mu,\nu)}$ such that
$$H^3_\mm(K_p)_{(\mu,\nu)}=H^2_\mm(K_{p-1})_{(\mu,\nu)}=0 \textrm{  and  } H^1_\mm(H_{p-1})_{(\mu,\nu)}=H^0_\mm(H_{p-2})_{(\mu,\nu)}=0.$$
Now, applying Lemma \ref{lem:HmHp} we get that $H^1_\mm(H_{p-1})_{(\mu,\nu)}=H^0_\mm(H_{p-2})_{(\mu,\nu)}=0$ for any
${(\mu,\nu)}$ such that $H^2_\mm(K_{p})_{(\mu,\nu)}=H^3_\mm(K_{p+1})_{(\mu,\nu)}=0$ and the conclusion follows by observing that
if $H^3_\mm(K_{p+1})_{(\mu,\nu)}=0$ then necessarily  $H^3_\mm(K_{p})_{(\mu,\nu)}=0$.
\end{proof}

Now, we focus on the local cohomology modules of the symmetric algebra $\Sc_I$. To state our results, we introduce the following subset of $\ZZ^2$:
\begin{multline}\label{eq:areaA}
\mathscr{A}:=\{ (\mu,\nu)\in \ZZ^2 : \mu\leq m-2, \nu\leq n-2 \}\cup 	 \\
\{ (\mu,\nu)\in \ZZ^2 : \mu\geq m,  \nu\leq n-2 \}\cup 	
\{ (\mu,\nu)\in \ZZ^2 : \mu\leq m-2,  \nu\geq n \}
\end{multline}
(see Figure \ref{fig:domains} where $\Ar$ corresponds to the union of the dark grey areas). We also denote by $\delta_{(\mu,\nu)}$ the canonical map
$$\delta_{(\mu,\nu)} : H^2_\mm(\Zc_2)_{(\mu,\nu)} \rightarrow H^2_\mm(\Zc_1)_{(\mu,\nu)}$$ that is induced by the approximation complex $\Zc_\bullet$.

\begin{prop}\label{prop:HmSI} For any couples of non negative integers $(\mu,\nu) \notin \Ar$ the complex ${\Zc_\bullet}_{(\mu,\nu)}$ is acyclic and we have the isomorphisms
	$$H^0_\mm(\Sc_I)_{(\mu,\nu)}\simeq \ker \delta_{(\mu,\nu)}, \ H^1_\mm(\Sc_I)_{(\mu,\nu)}\simeq \coker \delta_{(\mu,\nu)}, \
	H^2_\mm(\Sc_I)_{(\mu,\nu)}=H^3_\mm(\Sc_I)_{(\mu,\nu)}=0.$$
\end{prop}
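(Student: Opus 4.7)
My approach is to apply the Cech complex $\Cr^\bullet_\mm$ termwise to the approximation complex $\Zc_\bullet$ in bidegree $(\mu,\nu)$, producing a double complex whose two spectral sequences both abut to the same hypercohomology $\mathbb{H}^\bullet$. The first step is to combine Lemma \ref{lem:HmZp} with the explicit description of $H^i_\mm(R)$ in \eqref{eq:loc-coho} and the grading shift \eqref{eq:Zc}. The shift by $(pm,pn)$ automatically implies $H^2_\mm(K_p)_{(\mu+pm,\nu+pn)}=0$ as soon as $\mu,\nu\geq 0$, since $H^2_\mm(R)$ is supported in bidegrees with at least one coordinate $\leq-2$. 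The hypothesis $(\mu,\nu)\notin\Ar$ exactly matches, by a straightforward case analysis on the three components of $\Ar$, the condition forcing $H^3_\mm(K_{p+1})_{(\mu+pm,\nu+pn)}=0$. These two vanishings, together with Lemma \ref{lem:HmZp}, ensure that $H^i_\mm(\Zc_p)_{(\mu,\nu)}=0$ for all $p$ when $i\in\{0,1,3\}$, while for $i=2$ only the two central terms $H^2_\mm(\Zc_1)_{(\mu,\nu)}$ and $H^2_\mm(\Zc_2)_{(\mu,\nu)}$ survive.

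The column-filtered spectral sequence then consists of exactly these two nonzero entries, linked horizontally by the map $\delta_{(\mu,\nu)}$. It collapses at $E_2$ because all further differentials land in zero, yielding
\[ \mathbb{H}^0_{(\mu,\nu)}\simeq\ker\delta_{(\mu,\nu)},\quad \mathbb{H}^1_{(\mu,\nu)}\simeq\coker\delta_{(\mu,\nu)},\quad \mathbb{H}^n_{(\mu,\nu)}=0 \text{ otherwise}. \]
For the row-filtered spectral sequence, whose $E_2$ page is $E_2^{p,q}=H^q_\mm(H_{-p}(\Zc_\bullet))_{(\mu,\nu)}$, the column $p=0$ recovers $H^\bullet_\mm(\Sc_I)$. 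For $p\leq-1$ I would use the classical fact that the approximation-complex homologies $H_j(\Zc_\bullet)$ for $j\geq 1$ are annihilated by $I$, so that their $R$-supports lie in the finite base locus $\Bc$; combining this with Lemma \ref{lem:HmHp} via the short exact sequences $0\to B_p\to Z_p\to H_p\to 0$ should give $H^q_\mm(H_j(\Zc_\bullet))_{(\mu,\nu)}=0$ for $j\geq 1$ and $q\geq 1$. The vanishing $\mathbb{H}^n=0$ for $n\leq-1$ from the previous step then forces the remaining pieces $H^0_\mm(H_j(\Zc_\bullet))_{(\mu,\nu)}=0$ for $j\geq 1$; these modules being already $\mm$-torsion, this actually gives $H_j(\Zc_\bullet)_{(\mu,\nu)}=0$ for $j\geq 1$, i.e.\ the claimed acyclicity.

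With acyclicity in hand, the row-filtered spectral sequence collapses to its column $p=0$ and hence $\mathbb{H}^n_{(\mu,\nu)}=H^n_\mm(\Sc_I)_{(\mu,\nu)}$; matching this with the column-filtered computation of $\mathbb{H}^\bullet$ above yields the stated isomorphisms and vanishings. The main obstacle I anticipate is the third step: namely the control of $H^q_\mm(H_j(\Zc_\bullet))$ for $j\geq 1$ and $q\geq 1$. Translating the support conditions on $H_j(\Zc_\bullet)$, coming from the structure theory of approximation complexes, into bidegree-wise vanishings of local cohomology requires the careful use of Lemma \ref{lem:HmHp} combined with the Koszul short exact sequences linking cycles, boundaries, and homology, and is where the technical heart of the proof lies.
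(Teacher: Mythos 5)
Your framework—applying $\Cr^\bullet_\mm$ termwise to $\Zc_\bullet$ and playing off the two spectral sequences of the resulting double complex—is exactly the paper's approach, and your reading of the column-filtered side reaches the right conclusion. Two small imprecisions there: the reduction to the single map $\delta_{(\mu,\nu)}$ also requires killing $H^2_\mm(\Zc_3)_{(\mu,\nu)}$, which by Lemma \ref{lem:HmZp}(ii) equals $H^2_\mm(K_4)_{(\mu+3m,\nu+3n)}\simeq H^2_\mm(R)_{(\mu-m,\nu-n)}$ and does \emph{not} vanish automatically for $\mu,\nu\geq 0$; it is the second and third components of $\Ar$ that kill it, while the first component of $\Ar$ kills $H^3_\mm(R)_{(\mu-m,\nu-n)}$ and hence all $H^3_\mm(\Zc_p)_{(\mu,\nu)}$. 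Saying that $(\mu,\nu)\notin\Ar$ ``exactly matches'' the vanishing of $H^3_\mm(K_{p+1})$ alone therefore misattributes part of the hypothesis.

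The genuine gap is exactly where you flag it, and your proposed repair does not work. ``Annihilated by $I$'' gives $R$-support in $V(I)\subset\Spec R$, which is a two-dimensional cone, not $\Bc\subset\PP^1\times\PP^1$; and even granting support on the finite scheme $\Bc$ as a sheaf on $\PP^1\times\PP^1$, that only forces $H^q_\mm=0$ for $q\geq 2$, never for $q=1$, so the row-filtered $E_2$-page would not simplify as you need. The correct input is the standing assumption from Section \ref{subsec:torSI} that every base point is locally generated by at most three elements: by \cite[\S 4]{Bot11} this forces $\Zc_\bullet$ to be acyclic after localization at every point of $\PP^1\times\PP^1$, whence each $H_j(\Zc_\bullet)$ with $j\geq 1$ is $\mm$-torsion, giving both $H^q_\mm(H_j(\Zc_\bullet))=0$ for $q\geq 1$ and $H^0_\mm(H_j(\Zc_\bullet))=H_j(\Zc_\bullet)$. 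This is what makes the row-filtered spectral sequence degenerate as in the paper, and comparing with the column-filtered side then simultaneously yields $H_j(\Zc_\bullet)_{(\mu,\nu)}=0$ for $j\geq 1$ and the stated description of $H^\bullet_\mm(\Sc_I)_{(\mu,\nu)}$. Also beware that Lemma \ref{lem:HmHp} is about the Koszul homology $H_p=H_p(K_\bullet(\fb;R))$, not the approximation-complex homology $H_j(\Zc_\bullet)$, so invoking it ``via $0\to B_p\to Z_p\to H_p\to 0$'' for your step is a category error.
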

\begin{proof} By our assumption, the complex $\Zc_\bullet$ is acyclic after localization at any point in $\PP^1\times \PP^1$ (see e.g.~\cite[\S 4]{Bot11}). It follows that for any $p>0$ the homology modules $H_p(\Zc_\bullet)$ are $\mm$-torsion, which implies that $H^i_\mm(H_p(\Zc_\bullet))=0$ for $i>0$ and that  $H^0_\mm(H_p(\Zc_\bullet))=H_p(\Zc_\bullet)$.
	
	Now, consider the double complex $\Cr^\bullet_\mm(\Zc_\bullet)$. We deduce that its row-filtered spectral sequence converges at the second step and is of the form
	\[
	\xymatrixrowsep{0.2em}
	\xymatrixcolsep{0.2em}
	\xymatrix{
	H_3(\Zc_\bullet) & H_2(\Zc_\bullet) & H_1(\Zc_\bullet)  & H^0_\mm (\Sc_I) \\
	{0} & {0} & 0 &  H^1_\mm(\Sc_I)  \\
	{0} & {0} & 0&  H^2_\mm(\Sc_I)\\
	{0} & {0} & 0&  H^3_\mm(\Sc_I)\\
	}
	\]
On the other hand, by Lemma \ref{lem:HmZp} the column-filtered spectral sequence of $\Cr^\bullet_\mm(\Zc_\bullet)$ at the first page is of the form: 	
	\[
	\xymatrixrowsep{0.2em}
	\xymatrixcolsep{1em}
	\xymatrix{
	0 & 0 & 0  & 0 \\
	{0} & {0} & 0 &  0  \\
	H^2_\mm(\Zc_3) \ar[r]& H^2_\mm(\Zc_2) \ar[r]^{\delta_{(\mu,\nu)}}& H^2_\mm(\Zc_1) \ar[r]&  H^2_\mm(\Zc_0)\\
	H^3_\mm(\Zc_3) \ar[r]& H^3_\mm(\Zc_2) \ar[r]& H^3_\mm(\Zc_1) \ar[r]&  H^3_\mm(\Zc_0)\\
	}
	\]
The comparison of these two spectral sequences shows that $H_3(\Zc_\bullet)=H_2(\Zc_\bullet)=0$ and that $H_1(\Zc_\bullet)_{(\mu,\nu)}=0$ for any $(\mu,\nu)$ such that $H^2_\mm(\Zc_3)_{(\mu,\nu)}=0$, and this latter condition holds if $(\mu,\nu) \notin \Ar$ by Lemma \ref{lem:HmZp}, ii). Similarly, applying again Lemma \ref{lem:HmZp} we get that for any $(\mu,\nu) \notin \Ar$, $H^2_\mm(\Zc_0)_{(\mu,\nu)}=0$ and $H^3_\mm(\Zc_p)_{(\mu,\nu)}=0$ for any $p$ (we recall the grading \eqref{eq:Zc}  of $\Zc_\bullet$ we use). Therefore, the graded piece of degree ${(\mu,\nu)}\notin \Ar$ of the first page of this spectral sequence is simply the map $\delta_{(\mu,\nu)}$, which concludes the proof.	
\end{proof}

As a consequence of the above results, we recover, in our specific setting, the results proved by Botbol \cite{Bot11}. In the case where $\phi$ has no base point we also recover the classical determinantal representation of $\Sr$ as the determinant of a matrix of moving planes \cite{DixonRes,CGZ00}.

\begin{cor}\label{cor:Bot} Assume that all the base points are l.c.i. Then, for any $(\mu,\nu)\notin \Rr$ the determinant of the graded complex of free $S$-modules $(\Zc_\bullet)_{(\mu,\nu)}$
is equal to $F^{\deg(\phi)}$, where $F=0$ is a defining equation of $\Sr$.

Moreover, if $(\mu,\nu)=(2m-1,n-1)$ or $(\mu,\nu)=(m-1,2n-1)$ then $(\Zc_\bullet)_{(\mu,\nu)}$ has length two and its determinant is hence the ratio of two determinants of matrices of linear forms.
If in addition $\phi$ has no base point then $(\Zc_\bullet)_{(2m-1,n-1)}$ and $(\Zc_\bullet)_{(m-1,2n-1)}$ have length one and yield $(2mn\times 2mn)$-matrices of linear forms whose determinants are both equal to $F^{\deg(\phi)}$, up to multiplication by a nonzero constant.
\end{cor}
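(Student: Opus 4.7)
The plan is to combine Proposition \ref{prop:HmSI} with a MacRae invariant computation and a comparison between $\Sc_I$ and $\Rc_I$. First, comparing \eqref{eq:areaR} and \eqref{eq:areaA} gives $\Ar\subset\Rr$, so any $(\mu,\nu)\notin\Rr$ automatically lies outside $\Ar$, and Proposition \ref{prop:HmSI} ensures that $(\Zc_\bullet)_{(\mu,\nu)}$ is acyclic. This complex is thus a finite graded free resolution of $H_0\simeq(\Sc_I)_{(\mu,\nu)}$. Since this $S$-module is torsion — its associated sheaf on $\PP^3$ is supported on $\Sr$ — the MacRae invariant $[\det(\Zc_\bullet)_{(\mu,\nu)}]$ is well-defined and equals $\mathrm{div}_S((\Sc_I)_{(\mu,\nu)})$.

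Next I would identify this divisor with $\deg(\phi)\cdot[(F)]$. The l.c.i.\ hypothesis yields $\Kc=\Jc\la 1\ra:\mm^\infty$, hence the short exact sequence of graded $S$-modules
$$0\to H^0_\mm(\Sc_I)_{(\mu,\nu)}\to (\Sc_I)_{(\mu,\nu)}\to (\Rc_I)_{(\mu,\nu)}\to 0.$$
The sheaf on $\PP^3$ of $(\Rc_I)_{(\mu,\nu)}$ is $\pi_*\Oc_\Gamma(\mu,\nu)$, which is supported on $\Sr$ with generic rank $\deg(\phi)$ (since $\pi:\Gamma\to\Sr$ is generically finite of that degree), so $\mathrm{div}_S((\Rc_I)_{(\mu,\nu)})=\deg(\phi)\cdot[(F)]$. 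On the other hand, the l.c.i.\ assumption forces $\Gamma=\Gamma'$, so the sheafification on $\PP^3$ of $H^0_\mm(\Sc_I)_{(\mu,\nu)}$ vanishes; being a finitely generated $S$-module with zero sheaf, it is supported only at the irrelevant ideal of $S$ and contributes $0$ to the divisor. Combining these gives $\det(\Zc_\bullet)_{(\mu,\nu)}=F^{\deg(\phi)}$ up to a nonzero scalar.

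For the statements about length, the length-two claim reduces to a direct bidegree check: since $I$ has depth $2$, $Z_3\simeq K_4\simeq R(-4m,-4n)$, so $(\Zc_3)_{(2m-1,n-1)}=R_{(m-1,-1)}\otimes S(-3)=0$, and symmetrically for $(m-1,2n-1)$. When $\phi$ has no base points, Koszul self-duality (with shift $(4m-2,4n-2)$, coming from $\omega_R=R(-2,-2)$ and $\sum\deg f_i=(4m,4n)$) pairs $(H_2)_{(4m-1,3n-1)}$ with the graded dual of $(H_2)_{(-1,n-1)}$, which vanishes because $(K_2)_{(-1,n-1)}=0$. Together with $(K_3)_{(4m-1,3n-1)}=R^4_{(m-1,-1)}=0$ (hence $(B_2)_{(4m-1,3n-1)}=0$), this yields $(Z_2)_{(4m-1,3n-1)}=0$, so the complex has length one. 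Since $(\Zc_0)_{(2m-1,n-1)}=R_{(2m-1,n-1)}\otimes S$ is free of rank $2mn$, vanishing of the Euler characteristic of the length-one acyclic complex forces $(\Zc_1)_{(2m-1,n-1)}$ to also have rank $2mn$, producing the claimed $2mn\times 2mn$ matrix of linear forms.

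The main obstacle I foresee is the divisor computation for the torsion module $H^0_\mm(\Sc_I)_{(\mu,\nu)}$ in the second step: converting the geometric equality $\Gamma=\Gamma'$ (valid under the l.c.i.\ hypothesis) into the algebraic vanishing of its $S$-divisor requires verifying that this potentially nonzero module is concentrated at the irrelevant ideal of $S$, which is the point where the l.c.i.\ assumption is essentially used. The remaining pieces — acyclicity from Proposition \ref{prop:HmSI}, the Koszul duality bidegree arithmetic, and the Euler-characteristic rank balance — are then essentially bookkeeping.
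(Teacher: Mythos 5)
The first part of your argument has a genuine gap at exactly the place you flagged as an obstacle. You want to conclude that $H^0_\mm(\Sc_I)_{(\mu,\nu)}$ contributes nothing to the $S$-divisor by arguing that, since $\Gamma=\Gamma'$, this module ``sheafifies to zero on $\PP^3$.'' That inference is not valid: $\Gamma=\Gamma'$ only says that the trigraded sheaf $\widetilde{H^0_\mm(\Sc_I)}$ vanishes on $\PP^1\times\PP^1\times\PP^3$, which is automatic because this module is $\mm$-torsion. It does \emph{not} imply that the graded $S$-module $H^0_\mm(\Sc_I)_{(\mu,\nu)}$ is $S_+$-torsion. In general the $(\mu,\nu)$-piece of an $\mm$-torsion trigraded module can be a free $S$-module of positive rank (think of $(R/\mm^N)\otimes_k S$), and indeed the paper's own Proposition \ref{prop:J2/J1} shows that for bidegrees $(\mu-1,n-1)$ this very module is a nonzero free $S$-module. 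If that happened here, $\ann_S((\Sc_I)_{(\mu,\nu)})=0$ and the MacRae invariant of $(\Zc_\bullet)_{(\mu,\nu)}$ would not even be a well-defined divisor, so the $\deg(\phi)\cdot[(F)]$ conclusion would break. The l.c.i.\ hypothesis is not what saves you; it only gives $\Sc_I^*=\Rc_I$. What is actually needed, and what the paper proves, is the flat vanishing $H^0_\mm(\Sc_I)_{(\mu,\nu)}=0$ for all $(\mu,\nu)\notin\Rr$. This follows from Proposition \ref{prop:HmSI} once one checks, via Lemma \ref{lem:HmZp}, that both $H^2_\mm(\Zc_1)_{(\mu,\nu)}$ and $H^2_\mm(\Zc_2)_{(\mu,\nu)}$ vanish there, so that $\ker\delta_{(\mu,\nu)}=0$. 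Once you have that, $(\Sc_I)_{(\mu,\nu)}=(\Rc_I)_{(\mu,\nu)}$ is a graded piece of a domain with $\ann_S=(F)$, and your divisor computation for $(\Rc_I)_{(\mu,\nu)}$ closes the argument.

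For the length-one claim you take a route different from the paper's: you invoke a Koszul self-duality pairing $(H_2)_{(4m-1,3n-1)}$ with the dual of $(H_2)_{(-1,n-1)}$ using $\omega_R=R(-2,-2)$ and the total degree $(4m,4n)$. The bidegree bookkeeping is consistent, and $(K_2)_{(-1,n-1)}=0$ is correct, but the duality you need is a Matlis/local-duality statement for Koszul homology with respect to the non-maximal ideal $\mm=\mm_1\mm_2$, which is not the standard Gorenstein self-duality and would need to be established. The paper avoids this by observing that, with no base points, $H_2$ is $\mm$-torsion, so $H_2=H^0_\mm(H_2)$, and then Lemma \ref{lem:HmHp} reduces the vanishing of $(H_2)_{(4m-1,3n-1)}$ to the elementary check $H^2_\mm(K_4)_{(4m-1,3n-1)}\simeq H^2_\mm(R)_{(-1,n-1)}=0$, read off from \eqref{eq:loc-coho}. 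This is more concrete and safer than the duality route. Your remaining steps (vanishing of $(\Zc_3)_{(\mu,\nu)}$ by inspection, and the Euler-characteristic rank balance to get the $2mn\times 2mn$ size) agree with the paper and are fine.
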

\begin{proof} Applying Lemma \ref{lem:HmZp}, for any $(\mu,\nu)\notin \Rr$ we have $H^2_\mm(\Zc_1)_{(\mu,\nu)}=H^2_\mm(\Zc_2)_{(\mu,\nu)}=0$. Therefore, for any $(\mu,\nu)\notin \Rr$ we get $H^0_\mm(\Sc_I)_{(\mu,\nu)}=0$ and this implies the claimed property by acyclicity of $(\Zc_\bullet)_{(\mu,\nu)}$. We also notice that $\Zc_3\simeq R(-m,-n)\otimes S(-3)$ and hence $(\Zc_3)_{(\mu,\nu)}=0$ if $(\mu,\nu)=(2m-1,n-1)$ or $(\mu,\nu)=(m-1,2n-1)$.
	
Now, to prove the last claim we have to show that $(\Zc_2)_{(\mu,\nu)}=0$ if $\phi$ has no base point and $(\mu,\nu)=(2m-1,n-1)$ or $(\mu,\nu)=(m-1,2n-1)$, which we assume. By definition, $\Zc_2\simeq Z_2[2(m,n)]\otimes S(-2)$ and since the Koszul boundaries $B_2$ are generated in degree $(3m,3n)$ we deduce that $(Z_2)_{(\mu,\nu)+2(m,n)}\simeq (H_2)_{(\mu,\nu)+2(m,n)}$. Now, since $\phi$ has no base point $H_2$ is $\mm$-torsion, which implies that $H_2=H^0_\mm(H_2)$. Finally, applying Lemma \ref{lem:HmHp} we obtain that $(Z_2)_{(\mu,\nu)+2(m,n)}=0$ if $H^2_\mm(K_4)_{(\mu,\nu)+2(m,n)}=0$. But since $K_4\simeq R(-4(m,n))$, we see that $H^2_\mm(K_4)_{(4m-1,3n-1)}\simeq H^2_m(R)_{(-1,n-1)}=0$ and  $H^2_\mm(K_4)_{(3m-1,4n-1)}\simeq H^2_m(R)_{(m-1,-1)}=0$, which concludes the proof.
\end{proof}

\begin{rk} If the assumption on the base points is weakened so that some base points are a.l.c.i.~then Corollary \ref{cor:Bot} is still valid with the difference that the determinants contain linear forms as extraneous factors; more precisely, with the notation introduced in Section \ref{subsec:MPSI} they are all equal to
	$$F(\xb)^{\deg{\phi}}\cdot \prod_{p \in \Bc, \textrm{ a.l.c.i.}} L_p(\xb)^{e_p-d_p},$$
up to multiplication by a nonzero constant (see \cite{BCJ09} and \cite[Lemma 5.6]{Bot11}).	
\end{rk}

In order to obtain more compact representations of $\Sr$ than those obtained in Corollary \ref{cor:Bot}, Proposition \ref{prop:HmSI} suggests to investigate the map $\delta_{(\mu,\nu)}$ since it is strongly connected to the module $H^0_\mm(\Sc_I)\simeq \Jc/\Jc\la 1\ra$. Moreover, the shape of the area $\Ar$, where the complex  $\Zc_\bullet$ is acyclic, suggests to focus on bidegrees of the form $(*,n-1)$ and $(m-1,*)$: those bidegrees are marked with dashed red lines in Figure \ref{fig:domains}.

\emph{In what follows, for the sake of simplicity we will only focus on the bidegrees of the form $(*,n-1)$; the symmetric case of bidegrees of the form $(m-1,*)$ can be treated exactly in the same way.
}

\section{Moving quadrics and torsion of the symmetric algebra}\label{sec:MQ-SI}

In this section, we will combine moving quadrics and moving planes in order to obtain graded free $S$-resolutions of $(\Sc_I^*)_{(\mu,\nu)}$ for some ${(\mu,\nu)} \in \Rr$, hence of $(\Rc_I)_{(\mu,\nu)}$ if all base points are l.c.i. A key ingredient in our approach is the study of the module of moving planes of degree $(*,n-1)$.

\medskip

In what follows, the Hilbert function of a bigraded $R$-module $M$ is denoted by
$\HF_{M}:\ZZ^2 \rightarrow \NN.$
It is defined by $\HF_M(\mu,\nu)=\dim_k M_{(\mu,\nu)}$. Its corresponding Hilbert polynomial is denoted by $\HP_M$ and we recall that the following Grothendieck-Serre formula holds (see e.g.~\cite[Proposition 4.26]{BoCh17} or \cite{Hyry99}): for any $(\mu,\nu) \in \ZZ^2$ we have
$$ \HF_M(\mu,\nu) -\HP_M(\mu,\nu)= \sum_{i\geq 0} (-1)^i \HF_{H^i_\mm(M)}(\mu,\nu).$$

Recall that $Z_1$ is the first module of syzygies of $\fb=(f_0,\ldots,f_3)$ whose grading is induced by the canonical grading of the Koszul complex $K_\bullet(\fb;R)$; more precisely $Z_1\subset K_1\simeq R(-m,-n)^4$. Now, consider the syzygies of $\fb$ whose degree with respect to $R_2$ is equal to $n-1$:
$$\overline{Z}_1:=\oplus_{\mu \geq 0} {Z_1}_{(\mu,2n-1)}.$$
It is a graded $R_1=k[s_0,s_1]$-module that fits in the exact sequence of graded $R_1$-modules
$$ 0 \rightarrow \overline{Z}_1 \rightarrow \oplus_{i=1}^4 R_1(-m)\otimes (R_2)_{n-1} \xrightarrow{f_0,\ldots,f_3} R_1\otimes (R_2)_{2n-1},$$
where the notation $(R_2)_{l}$ refers to the graded piece of $R_2$ of degree $l$, which is isomorphic to $k^{l+1}$.
By Hilbert Syzygy Theorem $\overline{Z}_1$ is a free module. Moreover, its rank is equal to $2n$ and hence there exist non negative integers $\mu_1,\ldots,\mu_{2n}$ such that
\begin{equation}\label{eq:Z1bar}
\overline{Z}_1 \simeq \oplus_{i=1}^{2n} R_1(-m-\mu_i).	
\end{equation}

\begin{definition} We define the \emph{threshold degree} $\mu_0:=\max_{1\leq i \leq 2n} {\mu_i}$. It is the maximum degree of a minimal syzygy of $f_0,f_1,f_2,f_3$ of degree $n-1$ with respect to $R_2$.
\end{definition}

The threshold degree $\mu_0$ provides a fine measure of the values of $\mu$ such that the number of linearly independent moving planes of degree $(\mu-1,n-1)$ reaches its expected value, equivalently the Hilbert function of $H_0=R/I$ reaches its Hilbert polynomial. More precisely, from Lemma \ref{lem:HmHp} and the description \eqref{eq:loc-coho} of the local cohomology modules of $R$, we deduce straightforwardly that for any integer $\mu \geq 2m$  $$H^0_\mm(H_0)_{(\mu-1+m,2n-1)}=H^1_\mm(H_0)_{(\mu-1+m,2n-1)}=0$$ and hence, by the Grothendieck-Serre formula,
$$\HF_{H_0}(\mu-1+m,2n-1)=r:=\sum_{p\in \Bc} d_p.$$
The threshold integer $\mu_0$ is nothing that the smallest integer $\mu$ such that the three above equalities holds:

\begin{prop}\label{prop:freeness} For any $\mu\geq \mu_0$ the number of linearly independent moving planes of degree $(\mu-1,n-1)$ is equal to $$\HF_{Z_1}(\mu-1+m,2n-1)=2n(\mu-m)+r$$	
and we have
	$$\HF_{H_0}(\mu-1+m,2n-1)=r, \ H^0_\mm(H_0)_{(\mu-1+m,2n-1)}=H^1_\mm(H_0)_{(\mu-1+m,2n-1)}=0$$
and
$$m-\frac{r}{2n} \leq \mu_0\leq \min\{2m,2mn-r\}.$$	
\end{prop}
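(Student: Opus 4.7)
My plan is to leverage the free $R_1$-decomposition $\overline{Z}_1 \simeq \oplus_{i=1}^{2n} R_1(-m - \mu_i)$ recalled in \eqref{eq:Z1bar}, and extract all four claims by combining a Hilbert polynomial computation with the Grothendieck-Serre formula and Lemma \ref{lem:HmHp}.

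First, I would compute $\sigma := \sum_i \mu_i$ by chasing Hilbert polynomials. Letting $\overline{M}$ denote the image of the second arrow in the exact sequence defining $\overline{Z}_1$, the short exact sequence $0 \to \overline{Z}_1 \to R_1(-m)^{4n} \to \overline{M} \to 0$ yields $\HP_{\overline{M}}(\mu') = 2n\mu' - 2nm + 2n + \sigma$. Since $\overline{M}$ embeds in $R_1 \otimes_k (R_2)_{2n-1} \simeq R_1^{2n}$ with cokernel $(R/I)_{(*, 2n-1)}$, and since $\Bc$ is zero-dimensional so that $\HP_{R/I}$ is identically the constant $r = \sum_p d_p$, equating Hilbert polynomials forces $\sigma = 2nm - r$.

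For any $\mu \geq \mu_0$, every shift $\mu - \mu_i$ is nonnegative, hence
$$\HF_{Z_1}(\mu-1+m, 2n-1) = \HF_{\overline{Z}_1}(\mu - 1 + m) = \sum_{i=1}^{2n}(\mu - \mu_i) = 2n(\mu - m) + r,$$
and substituting back in the exact sequence gives $\HF_{H_0}(\mu - 1 + m, 2n - 1) = r$. For the local cohomology vanishing I apply Lemma \ref{lem:HmHp} with $p = 0$, $i = 1$: a direct inspection using \eqref{eq:loc-coho} shows that $H^2_\mm(K_1)_{(\mu-1+m, 2n-1)} = 0$ and $H^3_\mm(K_2)_{(\mu-1+m, 2n-1)} = 0$ for every $\mu \geq 0$, since the relevant $\nu$-components ($n - 1$ or $-1$) lie outside the supports of $H^2_\mm(R)$ or $H^3_\mm(R)$. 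This yields $H^1_\mm(H_0)_{(\mu-1+m, 2n-1)} = 0$, and since $H^i_\mm(H_0) = 0$ for $i \geq 2$ by the same lemma, the Grothendieck-Serre formula collapses to
$$\HF_{H_0}(\mu-1+m, 2n-1) - r = \HF_{H^0_\mm(H_0)}(\mu-1+m, 2n-1),$$
whose left-hand side vanishes by what precedes; the vanishing of $H^0_\mm(H_0)_{(\mu-1+m, 2n-1)}$ follows.

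The bounds on $\mu_0$ then come by elementary arguments. The lower bound $\mu_0 \geq m - r/(2n)$ is immediate from $\HF_{\overline{Z}_1}(\mu_0 - 1 + m) = 2n(\mu_0 - m) + r \geq 0$, while $\overline{Z}_1 \subset R_1(-m)^{4n}$ forces every $\mu_i \geq 0$ and hence $\mu_0 \leq \sigma = 2mn - r$. For $\mu_0 \leq 2m$, I verify that at bidegree $(\mu+m-1, 2n-1)$ with $\mu \geq 2m$ one also has $H^3_\mm(K_3)_{(\mu+m-1, 2n-1)} = 0$, so Lemma \ref{lem:HmHp} with $p = 0$, $i = 0$ gives $H^0_\mm(H_0)_{(\mu+m-1, 2n-1)} = 0$. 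The Grothendieck-Serre formula then forces $\HF_{\overline{Z}_1}(\mu + m - 1) = 2n\mu - \sigma$, and since $\sum_i \max(\mu - \mu_i, 0) \geq 2n\mu - \sigma$ with equality if and only if $\mu \geq \mu_i$ for every $i$, we conclude $\mu_0 \leq 2m$. The main subtle point is this last equivalence, which converts the matching of Hilbert function and Hilbert polynomial of $\overline{Z}_1$ at a single degree into the sought degree bound on its generators.
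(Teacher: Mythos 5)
Your proof is correct and follows essentially the same route as the paper: use the freeness \eqref{eq:Z1bar} to compute $\HF_{Z_1}$ for $\mu\geq\mu_0$, chase the exact Koszul sequence to get $\HF_{H_0}$, pin down $\sum_i\mu_i=2mn-r$ by comparing to the polynomial value $r$, and extract the local cohomology vanishing from Lemma~\ref{lem:HmHp} plus Grothendieck--Serre. The only notable (and welcome) difference is that you make explicit the argument for $\mu_0\leq 2m$ — namely, that $\sum_i\max(\mu-\mu_i,0)=2n\mu-\sigma$ at $\mu=2m$ forces every $\mu_i\leq 2m$ — where the paper simply asserts "we also deduce that $\mu_0\leq 2m$"; and you derive $\sigma=2mn-r$ by an extra short exact sequence through the image module $\overline{M}$ rather than by evaluating $\HF_{H_0}(\mu-1+m,2n-1)=r$ at $\mu\geq 2m$ directly, though in that step you should be slightly more careful to justify that the single-graded Hilbert polynomial of the slice $(R/I)_{(*,2n-1)}$ coincides with the bigraded constant $r$ (this follows from the local cohomology vanishing you compute in any case).
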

\begin{proof} By definition of $\mu_0$ and \eqref{eq:Z1bar}, for any $\mu\geq \mu_0$
	$$\HF_{Z_1}(\mu-1+m,2n-1)=\sum_{i=1}^{2n}(\mu-\mu_i)=2n\mu-\sum_{i=1}^{2n}\mu_i.$$	
	Therefore, from the exact sequence of bigraded $R$-modules
	$$ 0 \rightarrow Z_1 \rightarrow \oplus_{i=1}^4 R(-m,-n) \xrightarrow{f_0,\ldots,f_3} R \rightarrow H_0 \rightarrow 0$$
we deduce that for any $\mu\geq \mu_0$
\begin{align*}
	\HF_{H_0}(\mu-1+m,2n-1) &= \HF_{Z_1}(\mu-1+m,2n-1) -4\HF_{R}(\mu-1,n-1)\\
	                        & \hspace{1em} +\HF_{R}(\mu-1+m,2n-1) \\
							&= 2n\mu-\sum_{i=1}^{2n}\mu_i -4\mu n+2n(\mu+n) = 2mn-\sum_{i=1}^{2n}\mu_i.
\end{align*}
But since, as we already noticed, $\HF_{H_0}(\mu-1+m,2n-1)=r$ for $\mu\geq 2m$ we deduce that
$r=2mn-\sum_{i=1}^{2n}\mu_i$ and the claimed properties about the Hilbert functions of $Z_1$ and $H_0$ follow. Notice that we also deduce that $\mu_0\leq 2m$. In addition, Lemma \ref{lem:HmHp} shows that $H^1_\mm(H_0)_{(\mu,\nu)}=0$ for any $(\mu,\nu)\notin \Rr$, so we have $H^1_\mm(H_0)_{(\mu-1+m,2n-1)}=0$ for any  $\mu\geq 0$. From here the claimed vanishing property of $H^0_\mm(H_0)$ follows from the Grothendieck-Serre formula. Finally, the lower bound on $\mu_0$ follows straightforwardly from the equality $2mn-r=\sum_{i=1}^{2n}\mu_i$.
\end{proof}

We already noticed that $H^0_\mm(\Sc_I)_{(\mu-1,n-1)}=0$ for any $\mu$ such that $(\mu-1,n-1)\notin \Rr$, i.e. for any $\mu \geq 2m$ (see the proof of Corollary \ref{cor:Bot}). Proposition \ref{prop:freeness} allows us to give a structural property of the graded pieces of this local cohomology module for any  $\mu \geq \mu_0$. We recall from Section \ref{subsec:torSI} that the ideal $\Jc\la 1\ra$ is the defining ideal of $\Sc_I$, $\Jc=\Jc\la 1\ra:\mm^\infty$ the defining ideal of $\Sc_I^*$ and $\Jc\la 2 \ra$ is the ideal generated by elements in $\Jc$ of degree at most 2 in the variables $\xb$. 	

\begin{prop}\label{prop:J2/J1} For any integer $\mu\geq \mu_0$, $H^0_\mm(\Sc_I)_{(\mu-1,n-1)}$ is a free $S$-module and we have the isomorphism of graded $S$-modules
$$H^0_\mm(\Sc_I)_{(\mu-1,n-1)}\simeq  (\Jc\la 2\ra/\Jc\la 1\ra )_{(\mu-1,n-1)}.$$
Moreover, $H^1_\mm(\Sc_I)_{(\mu-1,n-1)}=0$ for any $\mu\geq \mu_0$.
\end{prop}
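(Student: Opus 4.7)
My plan is to apply Proposition \ref{prop:HmSI} at the bidegree $(\mu-1,n-1)$ and then evaluate the source and target of the connecting map $\delta_{(\mu-1,n-1)}$ via Lemma \ref{lem:HmZp} together with Proposition \ref{prop:freeness}. Since the second coordinate $n-1$ is neither $\leq n-2$ nor $\geq n$, the bidegree $(\mu-1,n-1)$ lies outside $\Ar$ for every value of $\mu$, so Proposition \ref{prop:HmSI} gives
$$H^0_\mm(\Sc_I)_{(\mu-1,n-1)}\simeq \ker\delta_{(\mu-1,n-1)}, \qquad H^1_\mm(\Sc_I)_{(\mu-1,n-1)}\simeq \coker\delta_{(\mu-1,n-1)}.$$

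The central computation is the vanishing of $H^2_\mm(\Zc_1)_{(\mu-1,n-1)}$ for $\mu\geq \mu_0$. Using the grading \eqref{eq:Zc} this equals $H^2_\mm(Z_1)_{(\mu-1+m,2n-1)}\otimes_k S(-1)$. The hypothesis of Lemma \ref{lem:HmZp} (iii) requires $H^2_\mm(K_1)_{(\mu-1+m,2n-1)}\simeq H^2_\mm(R)_{(\mu-1,n-1)}^4=0$, which is immediate from the explicit description \eqref{eq:loc-coho} since $\mu-1\geq -1$ and $n-1\geq 0$. The lemma then identifies $H^2_\mm(Z_1)_{(\mu-1+m,2n-1)}$ with $H^0_\mm(H_0)_{(\mu-1+m,2n-1)}$, which vanishes for $\mu\geq\mu_0$ by Proposition \ref{prop:freeness}. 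This simultaneously forces $H^1_\mm(\Sc_I)_{(\mu-1,n-1)}=0$ and collapses $\ker\delta_{(\mu-1,n-1)}$ to the full source $H^2_\mm(\Zc_2)_{(\mu-1,n-1)}$. An identical application of Lemma \ref{lem:HmZp} (iii) to $Z_2$, where the condition $H^2_\mm(K_2)_{(\mu-1+2m,3n-1)}=H^2_\mm(R)_{(\mu-1,n-1)}^6=0$ is the same local-cohomology vanishing, then yields
$$H^0_\mm(\Sc_I)_{(\mu-1,n-1)}\simeq H^0_\mm(H_1)_{(\mu-1+2m,3n-1)}\otimes_k S(-2).$$
Because $H^0_\mm(H_1)$ is the $\mm$-torsion of the finitely generated $R$-module $H_1$, its bigraded component in degree $(\mu-1+2m,3n-1)$ is a finite-dimensional $k$-vector space, so the right-hand side is a free graded $S$-module generated in $\xb$-degree $2$, which proves the freeness assertion.

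It remains to identify this free module with $(\Jc\la 2\ra/\Jc\la 1\ra)_{(\mu-1,n-1)}$. The chain $\Jc\la 1\ra\subset \Jc\la 2\ra\subset \Jc$ combined with $H^0_\mm(\Sc_I)=\Jc/\Jc\la 1\ra$ yields a natural injection $(\Jc\la 2\ra/\Jc\la 1\ra)_{(\mu-1,n-1)}\hookrightarrow H^0_\mm(\Sc_I)_{(\mu-1,n-1)}$. Both modules are generated as graded $S$-modules by their $\xb$-degree-$2$ component: the right hand side directly from the description $H^0_\mm(H_1)_{(\mu-1+2m,3n-1)}\otimes_k S(-2)$ established above, and the left hand side because $\Jc\la 2\ra$ is generated as a $T$-ideal by elements of $\xb$-degree at most $2$ while the $\xb$-degree $\leq 1$ part of $H^0_\mm(\Sc_I)_{(\mu-1,n-1)}$ vanishes by the same shape $\otimes_k S(-2)$. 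Since tautologically the $\xb$-degree-$2$ parts of $\Jc\la 2\ra$ and $\Jc$ coincide, the two $\xb$-degree-$2$ components agree and the inclusion is an isomorphism. I expect this last identification to be the main obstacle, as it requires careful bookkeeping of the $\xb$-graded pieces of the three ideals $\Jc\la 1\ra$, $\Jc\la 2\ra$, and $\Jc$ within the fixed $R$-bidegree $(\mu-1,n-1)$.
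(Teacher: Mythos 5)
Your proof is correct and follows the same route as the paper: apply Proposition \ref{prop:HmSI} at bidegree $(\mu-1,n-1)\notin\Ar$, kill $H^2_\mm(\Zc_1)_{(\mu-1,n-1)}$ via Lemma \ref{lem:HmZp}(iii) and Proposition \ref{prop:freeness}, and read off freeness from $H^0_\mm(\Sc_I)_{(\mu-1,n-1)}\simeq H^2_\mm(\Zc_2)_{(\mu-1,n-1)}$, a vector space tensored with $S(-2)$. The one genuine addition in your write-up is that you spell out the identification with $(\Jc\la 2\ra/\Jc\la 1\ra)_{(\mu-1,n-1)}$, which the paper's proof leaves implicit; your argument that both submodules of $\Jc/\Jc\la 1\ra$ are generated in $\xb$-degree $2$ and agree there (using $(\Jc\la 2\ra)_2=\Jc_2$ tautologically and $(\Jc\la i\ra)_1=\Jc_1$ for $i=1,2$) is correct and is the content the paper takes for granted.
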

\begin{proof}
By Lemma \ref{lem:HmZp}, iii) we get
$$H^2_\mm(\Zc_1)_{(\mu-1,n-1)}\simeq H^0_\mm(H_0)_{(\mu-1,n-1)+(m,n)}\otimes S(-1)$$
for any integer $\mu$. Therefore, Proposition \ref{prop:freeness} implies that $H^2_\mm(\Zc_1)_{(\mu-1,n-1)}=0$ for any $\mu\geq \mu_0$. Moreover, applying Proposition \ref{prop:HmSI} we deduce that for any $\mu\geq \mu_0$ we have
\begin{equation*}%\label{eq:Z2iso}
H^0_\mm(\Sc_I)_{(\mu-1,n-1)}\simeq H^2_\mm(\Zc_2)_{(\mu-1,n-1)}\simeq H^2_\mm(Z_2)_{(\mu-1,n-1)+2(m,n)}\otimes S(-2).	
\end{equation*}
and $$H^1_\mm(\Sc_I)_{(\mu-1,n-1)}=0.$$
It follows that $H^0_\mm(\Sc_I)_{(\mu-1,n-1)}$ is a free $S$-module, generated in degree 2.
\end{proof}

In the terminology of moving planes and moving quadrics following $\phi$, we just proved that for any $\mu\geq \mu_0$,
$H^0_\mm(\Sc_I)_{(\mu-1,n-1)}$ is the quotient of the vector space $W_{(\mu,\nu)}$ of moving quadrics of bidegree $(\mu-1,n-1)$ by the vector subspace $V_{(\mu,\nu)}'$ of moving quadrics generated by moving planes of the same bidegree. In addition, we have to the following result.

\begin{thm}\label{thm:quadres} For any $\mu\geq \mu_0$ the complex
\begin{equation}\label{eq:Z1Z2cpx}
	0\rightarrow (\Zc_2)_{(\mu-1,n-1)} \rightarrow (\Zc_1)_{(\mu-1,n-1)}\oplus \left(\Jc\la 2 \ra/\Jc\la 1 \ra\right)_{(\mu-1,n-1)}\rightarrow (\Zc_0)_{(\mu-1,n-1)}
\end{equation}	
is a minimal graded free resolution of the $S$-module $(\Sc_I^*)_{(\mu-1,n-1)}$. Moreover, its determinant is equal to
$$ F(\xb)^{\deg(\phi)}\cdot\prod_{\stackrel{p\in \Bc}{p\ a.l.c.i}} L_p(\xb)^{e_p-d_p},$$
where $F(\xb)=0$ is a defining equation of $\Sr$.
\end{thm}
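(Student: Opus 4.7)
\emph{Proof plan.} The strategy is to first build a length-two free $S$-resolution of $(\Sc_I)_{(\mu-1,n-1)}$ out of the approximation complex, then to splice in a free summand accounting for the torsion $H^0_\mm(\Sc_I)_{(\mu-1,n-1)}$ in order to resolve $(\Sc_I^*)_{(\mu-1,n-1)}$, and finally to identify the determinant via the MacRae divisor.

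Since $\nu = n-1$ lies neither in $\{\nu \leq n-2\}$ nor in $\{\nu \geq n\}$, the bidegree $(\mu-1,n-1)$ sits outside the region $\Ar$ of \eqref{eq:areaA}, and Proposition \ref{prop:HmSI} therefore guarantees that $(\Zc_\bullet)_{(\mu-1,n-1)}$ is acyclic. A straightforward grading check using \eqref{eq:Zc} together with $Z_3 \simeq K_4 \simeq R(-4m,-4n)$ shows that $(\Zc_3)_{(\mu-1,n-1)} = 0$, so
$$0 \to (\Zc_2)_{(\mu-1,n-1)} \xrightarrow{d_2^\xb} (\Zc_1)_{(\mu-1,n-1)} \xrightarrow{d_1^\xb} (\Zc_0)_{(\mu-1,n-1)}$$
is a graded free $S$-resolution of $(\Sc_I)_{(\mu-1,n-1)}$. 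Proposition \ref{prop:J2/J1} additionally shows that $K := H^0_\mm(\Sc_I)_{(\mu-1,n-1)} \simeq (\Jc\la 2\ra / \Jc\la 1\ra)_{(\mu-1,n-1)}$ is a free graded $S$-module generated in $\xb$-degree $2$, and that $\Jc_{(\mu-1,n-1)} = \Jc\la 2\ra_{(\mu-1,n-1)}$.

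Fixing a graded $S$-linear section $\iota \colon K \hookrightarrow \Jc\la 2\ra_{(\mu-1,n-1)} \subset (\Zc_0)_{(\mu-1,n-1)}$ by sending each basis element of $K$ to a chosen representative quadric, I would set $\partial_1(c,d) := d_1^\xb(c) + \iota(d)$ and $\partial_2(a) := (d_2^\xb(a), 0)$. The cokernel of $\partial_1$ is then $T_{(\mu-1,n-1)}/\Jc\la 2\ra_{(\mu-1,n-1)} = (\Sc_I^*)_{(\mu-1,n-1)}$. If $\partial_1(c,d) = 0$, then $\iota(d) = -d_1^\xb(c) \in \Jc\la 1\ra_{(\mu-1,n-1)}$; since $\iota$ is a section of the quotient $\Jc\la 2\ra/\Jc\la 1\ra$, this forces $d = 0$, and then $c \in \ker d_1^\xb = \mathrm{im}\,d_2^\xb$ by the resolution above, so $(c,0) \in \mathrm{im}\,\partial_2$. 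Injectivity of $\partial_2$ follows from that of $d_2^\xb$, which itself reduces to $(\Zc_3)_{(\mu-1,n-1)} = 0$. Every entry of $\partial_1$ and $\partial_2$ is a polynomial of positive $\xb$-degree, hence lies in $\mm_S = (x_0,\ldots,x_3)$, so the resolution is minimal.

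For the determinant, the short exact sequence $0 \to K \to (\Sc_I)_{(\mu-1,n-1)} \to (\Sc_I^*)_{(\mu-1,n-1)} \to 0$ combined with the freeness of $K$ yields the equality $\mathrm{div}_S((\Sc_I^*)_{(\mu-1,n-1)}) = \mathrm{div}_S((\Sc_I)_{(\mu-1,n-1)})$, and this latter divisor coincides with the determinant of $(\Zc_\bullet)_{(\mu-1,n-1)}$. Identifying the codimension-one support from the geometry of $\Gamma' = \Proj(\Sc_I) \subset \PP^1 \times \PP^1 \times \PP^3$, the prime $(F)$ contributes with multiplicity equal to the generic fiber cardinality $\deg(\phi)$ of $\Gamma' \to \Sr$, and each plane $(L_p)$ attached to an a.l.c.i.~base point contributes multiplicity $e_p - d_p$ by \cite[Lemma 5.6]{Bot11} and \cite{BCJ09}, giving the claimed product $F^{\deg(\phi)} \prod_p L_p^{e_p-d_p}$. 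The principal subtlety is that Corollary \ref{cor:Bot} directly applies only when $\mu \geq 2m$ (so $(\mu-1,n-1) \notin \Rr$); for $\mu_0 \leq \mu < 2m$ one must invoke the intrinsic MacRae-divisor argument above, which depends only on the module $(\Sc_I^*)_{(\mu-1,n-1)}$ and not on any particular resolution.
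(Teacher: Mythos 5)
Your proof follows the same route as the paper: acyclicity of $(\Zc_\bullet)_{(\mu-1,n-1)}$ from Proposition \ref{prop:HmSI}, vanishing of $(\Zc_3)_{(\mu-1,n-1)}$ by the grading check, freeness of $\left(\Jc\la 2\ra/\Jc\la 1\ra\right)_{(\mu-1,n-1)}$ from Proposition \ref{prop:J2/J1}, the explicit splicing to build $\partial_1,\partial_2$, and identification of the MacRae divisor via the components of $\Gamma'$. You have supplied more detail than the paper's terse proof, and the core argument is sound.

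There is one imprecision in the determinant step that you should fix: you write $\mathrm{div}_S\bigl((\Sc_I^*)_{(\mu-1,n-1)}\bigr)=\mathrm{div}_S\bigl((\Sc_I)_{(\mu-1,n-1)}\bigr)$ and identify the latter with ``the determinant of $(\Zc_\bullet)_{(\mu-1,n-1)}$.'' But when $\mu_0\leq\mu<2m$ the torsion $K=H^0_\mm(\Sc_I)_{(\mu-1,n-1)}$ is a nonzero \emph{free} $S$-module sitting inside $(\Sc_I)_{(\mu-1,n-1)}$, so $\ann_S\bigl((\Sc_I)_{(\mu-1,n-1)}\bigr)=0$; its divisor in the paper's sense is not defined, and the Euler characteristic of $(\Zc_\bullet)_{(\mu-1,n-1)}$ is strictly positive, so that complex does not admit a determinant either. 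The correct argument is precisely the one you then invoke directly: $(\Sc_I^*)_{(\mu-1,n-1)}$ itself is a torsion $S$-module (its annihilator contains the equation of $\pi(\Gamma')$), the modified complex \eqref{eq:Z1Z2cpx} resolves it, and its divisor is computed from the irreducible components of $\Gamma'$ with multiplicities $\deg(\phi)$ for $\Gamma$ and $e_p-d_p$ for the a.l.c.i.~planes, as in Section \ref{subsec:MPSI}. You should drop the detour through $(\Sc_I)_{(\mu-1,n-1)}$ and state the direct argument only.
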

\begin{proof} This is a consequence of Proposition \ref{prop:HmSI}, which shows the acyclicity of the complex $(\Zc_\bullet)_{(\mu-1,n-1)}$, and Proposition \ref{prop:J2/J1} which shows that the $S$-module $\left(\Jc\la 2 \ra/\Jc\la 1 \ra\right)_{(\mu-1,n-1)}$ is free. The irreducible decomposition of the determinants of these complexes is discussed in Section \ref{subsec:MPSI}.
\end{proof}

As an immediate corollary, we get the following free resolutions of some graded components of the Rees algebra of $I$.

\begin{cor}\label{cor:corthquad} Assume that all the base points are l.c.i. Then for any $\mu\geq \mu_0$ the complex \eqref{eq:Z1Z2cpx} is a minimal graded free resolution of $(\Rc_I)_{(\mu-1,n-1)}$. Its determinant yields a representation of $\Sr$, raised to the power $\deg(\phi)$, as the ratio of two determinants, the one in the numerator being filled with moving planes and quadrics following $\phi$.
\end{cor}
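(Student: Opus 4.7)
The plan is to derive this corollary as an immediate specialization of Theorem \ref{thm:quadres}. First I would observe that under the l.c.i.~assumption on all base points, Section \ref{subsec:MPSI} recalls $\Kc = \Jc\la 1\ra : \mm^\infty = \Jc$, so that the defining trigraded ideals of $\Rc_I$ and of $\Sc_I^*$ agree inside $T$. Consequently $\Sc_I^* \simeq \Rc_I$ as trigraded $T$-modules, and passing to the bigraded component of bidegree $(\mu-1,n-1)$ with respect to the grading induced by $R$ yields an isomorphism of graded $S$-modules $(\Sc_I^*)_{(\mu-1,n-1)} \simeq (\Rc_I)_{(\mu-1,n-1)}$.

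Next, I would apply Theorem \ref{thm:quadres}: for any $\mu \geq \mu_0$, the complex \eqref{eq:Z1Z2cpx} is a minimal graded free resolution of $(\Sc_I^*)_{(\mu-1,n-1)}$, hence by the identification above of $(\Rc_I)_{(\mu-1,n-1)}$. Since the l.c.i.~hypothesis forbids any a.l.c.i.~base point, the extraneous product $\prod_{p\ \mathrm{a.l.c.i.}} L_p(\xb)^{e_p-d_p}$ appearing in the determinant formula of Theorem \ref{thm:quadres} is empty, so the determinant of this resolution is exactly $F(\xb)^{\deg(\phi)}$.

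For the determinantal interpretation, I would invoke the machinery recalled in Section \ref{sec:prem}: the resolution has length two and its cokernel, a graded component of $\Rc_I$, is supported on the hypersurface $\Sr$, so the MacRae invariant is well defined and is given, up to a non-zero constant, by an alternating ratio of maximal minors of the matrices of $\partial_1$ and $\partial_2$. By construction the columns of $\partial_1$ correspond to a basis of $V_{(\mu-1,n-1)}$ together with a basis of a complement of $V'_{(\mu-1,n-1)}$ in $W_{(\mu-1,n-1)}$, so this matrix is exactly filled with moving planes and moving quadrics following $\phi$; on the other hand the entries of $\partial_2$ are linear forms in $S$ coming from second-order Koszul syzygies of $\fb$, as is visible from the description of $\Zc_\bullet$ in \eqref{eq:Zc}.

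The only subtlety worth verifying explicitly is that the identification $\Sc_I^* \simeq \Rc_I$ descends compatibly to the bigraded component $(\mu-1,n-1)$; but this is immediate because both algebras arise as quotients of the trigraded algebra $T$ by trigraded ideals, and the map $\beta$ defining $\Rc_I$ is itself trigraded. Once this is in place, the corollary is essentially a restatement of Theorem \ref{thm:quadres} in the special case where the Rees algebra and the symmetric algebra modulo its $\mm$-torsion coincide, so I do not anticipate any genuine obstacle beyond the bookkeeping just described.
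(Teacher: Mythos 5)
Your proposal is correct and takes essentially the same route as the paper, which proves the corollary in one line by invoking the equivalence $\Sc_I^*=\Rc_I$ (equivalently $\Jc=\Kc$) under the l.c.i.\ hypothesis and then citing Theorem~\ref{thm:quadres}. You simply spell out the bookkeeping—passing to the $(\mu-1,n-1)$ bigraded component, noting the extraneous a.l.c.i.\ factors vanish, and recalling the MacRae-invariant interpretation—that the paper leaves implicit.
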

\begin{proof} This follows from the fact that $\Sc_I^*=\Rc_I$ if and only if all the base points are l.c.i.
\end{proof}

\noindent {\it Proof of Theorem \ref{thm:intro2}:} Theorem \ref{thm:quadres} and Corollary \ref{cor:corthquad} prove Theorem \ref{thm:intro2} since by definition the matrices of \eqref{eq:Z1Z2cpx} are from left to right, a matrix of linear forms built from the second order Koszul syzygies of $I$ in bidegree $(\mu-1+2m,3n-1)$, including a block of zero rows at its bottom, and the matrix $\MM_{(\mu-1,n-1)}$ of moving planes and quadrics defined in Section \ref{sec:intro}. \qed
\medskip

We notice that if $\mu=2m$ the complex \eqref{eq:Z1Z2cpx} gives back the complex described in Corollary \ref{cor:Bot} since in this case $H^0_\mm(\Sc_I)_{(2m-1,n-1)}=0$. This means that the column block of $\MM_{(\mu-1,n-1)}$ corresponding to moving quadrics disappears. Actually, the family of complexes \eqref{eq:Z1Z2cpx} can be interpreted as an extension of the family of complexes given by Botbol for any $(\mu,\nu)\notin \Rr$ (the white zone in Figure \ref{fig:domains}; see Theorem \ref{thm:botbol}) along the horizontal red dashed lines plotted in Figure \ref{fig:domains} (entering the grey area from the white area).

One may wonder for which value of $\mu$ moving quadrics do appear in \eqref{eq:Z1Z2cpx}. Let $l_\mu$ and $q_\mu$ be the ranks of the free $S$-modules $(\Zc_1)_{(\mu-1,n-1)}$ and $\left(\Jc\la 2 \ra/\Jc\la 1 \ra\right)_{(\mu-1,n-1)}$ respectively. By Proposition \ref{prop:freeness} we have $l_\mu=2n(\mu-m)+r$ for any $\mu\geq \mu_0$. Since we must have $l_\mu+q_\mu\geq \dim_k R_{(\mu-1,n-1)}=\mu n$ for any $\mu\geq \mu_0$, we deduce that $q_\mu\geq n(2m-\mu)-r$. In particular, we necessarily have $q_\mu>0$, i.e.~moving quadrics do appear in \eqref{eq:Z1Z2cpx}, for any $\mu$ such that
\begin{equation}\label{eq:qmu}
m-\frac{r}{2n}\leq \mu_0\leq \mu < 2m-\frac{r}{n}.	
\end{equation}

\section{Determinantal formulas}\label{sec:detformula}

The goal of this section is to prove Theorem \ref{thm:intro1}. For that purpose, we analyze under which condition the resolution obtained in Theorem \ref{thm:quadres} has length one, equivalently ${\Zc_2}_{(\mu-1,n-1)}=0$, and hence yields a determinantal representation of the surface $\Sr$.

\medskip

Let $\fb'=\{f_0',f_1',f_2'\}$ be three general $k$-linear combinations of the polynomials $f_0,\ldots,f_3$ and set $I'=(\fb')$. Since all the base points are assumed to be locally defined by at most three generators, $I$ and $I'$ have the same saturation with respect to $\mm$, i.e.~they both define the base scheme $\Bc$. We denote by $K'_\bullet$ the Koszul complex of $\fb':=\{f_1',f_2',f_3'\}$ over $R$ and by $Z_p',B_p'$ and $H_p'$ its cycles, boundaries and homology modules respectively.

It turns out that the existence of syzygies of $\fb'$ of degree $(\mu-1,n-1)$ is strongly related to the vanishing of $(\Zc_2)_{(\mu-1,n-1)}$. Conditions on the existence of syzygies of $\fb'$ already appeared in the literature on the method of moving quadrics (see e.g.~\cite{CGZ00,BCD03,AHW05}) and led to the development of structural results on Koszul syzygies of $\fb'$, i.e.~on $B_1'$; see \cite{Cox01,CoSc03}, \cite[Section 4.3]{BuJo03} and \cite{HoWa06,AHW05}. The next result is another illustration of the importance of this concept.

\begin{prop}\label{prop:Z2Z1p} If there is no syzygies of $\fb'$ in degree $(\mu-1,n-1)$ for some integer $\mu$, then ${\Zc_2}_{(\mu-1,n-1)}=0$. Moreover, if all the base points are l.c.i.~then this property is an equivalence.
\end{prop}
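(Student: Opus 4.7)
My strategy is to analyze elements of $(\Zc_2)_{(\mu-1,n-1)}$ column by column after a convenient change of basis. Since a $k$-linear change of basis in $\mathrm{span}_k\{f_0,\ldots,f_3\}$ preserves $Z_1'$, $Z_2$, and the approximation complex up to isomorphism, the genericity of $\fb'$ allows me to reduce to the case $\fb = (f_0,f_1,f_2,f_3)$ with $\fb' = (f_0,f_1,f_2)$. Under the identification $\Zc_2 \simeq Z_2[2(m,n)]\otimes_k S(-2)$, an element of $(\Zc_2)_{(\mu-1,n-1)}$ corresponds via antisymmetry to a $4\times 4$ skew-symmetric matrix $G=(g_{ij})$ with entries $g_{ij}\in R_{(\mu-1,n-1)}$ whose four columns $G_j$ satisfy $\sum_{i=0}^3 g_{ij}f_i = 0$, i.e., are syzygies of $\fb$ of bidegree $(\mu-1,n-1)$.

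\textbf{Forward direction.} Since $g_{33}=0$, the syzygy condition on $G_3$ reads $g_{03}f_0+g_{13}f_1+g_{23}f_2 = 0$, giving a syzygy of $\fb'$ of bidegree $(\mu-1,n-1)$; by the hypothesis, $g_{03}=g_{13}=g_{23}=0$, and skew-symmetry forces the entire 4th row and column of $G$ to vanish. Then $G_0=(0,g_{10},g_{20},0)$ yields another syzygy $(0,g_{10},g_{20})$ of $\fb'$ in the same bidegree, hence $g_{10}=g_{20}=0$; similarly $G_1=(0,0,g_{21},0)$ produces the syzygy $(0,0,g_{21})$, forcing $g_{21}=0$. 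Thus $G=0$ and $(\Zc_2)_{(\mu-1,n-1)}=0$, with no assumption on the base points needed.

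\textbf{Reverse direction (under l.c.i.).} I argue the contrapositive: given a nonzero syzygy $y=(g'_0,g'_1,g'_2)$ of $\fb'$ in bidegree $(\mu-1,n-1)$, I wish to exhibit a nonzero $G\in (Z_2)_{(\mu-1+2m,3n-1)}$ having last column $y$. Setting $g_{i3}:=g'_i$ for $i=0,1,2$, the remaining column conditions on $G$ translate into finding $\omega'=g_{01}e_0\wedge e_1+g_{02}e_0\wedge e_2+g_{12}e_1\wedge e_2\in K_2'$ with $d_2'(\omega')=f_3\,y$. Equivalently, the class $[f_3 y]$ must vanish in $H_1(\fb')_{(\mu-1+m,2n-1)}$. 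Proving this vanishing is the heart of the argument, and the main obstacle.

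\textbf{Key step (showing $[f_3 y]=0$).} I plan to exploit the mapping-cone decomposition
$$K_\bullet(\fb) = \mathrm{Cone}\bigl(K_\bullet(\fb')(-(m,n))\xrightarrow{f_3}K_\bullet(\fb')\bigr),$$
whose long exact sequence in Koszul homology identifies the image of the projection $Z_2\to Z_1'$ (sending $G$ to its 4th column) with $\ker\bigl(f_3\colon H_1(\fb')\to H_1(\fb')\bigr)$. The l.c.i.\ hypothesis implies that at every base point $p\in\Bc$ the ideal $I_p$ is generated by a regular sequence of length $2$, so three general combinations still generate it: $I_p=I'_p$. Consequently $f_3\in I'$ locally at every point of $\PP^1\times\PP^1$ (trivially outside $\Bc$ and by the l.c.i.\ property at each $p\in\Bc$). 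Writing $f_3=\sum_i c_i f_i$ locally yields the explicit Koszul primitive $\sum_{i,j}c_i g'_j (f_i e_j-f_j e_i)$ for $f_3 y$ via the identity $\sum_j f_3 g'_j e_j=\sum_{i,j}c_i g'_j f_i e_j=\sum_{i,j}c_i g'_j(f_i e_j-f_j e_i)$ (using $\sum_j f_j g'_j=0$). Since $\fb'$ is a regular sequence on the complement of $\Bc$, the module $H_1(\fb')$ is supported on the finite set $\Bc$, so local vanishing of $[f_3 y]$ at each base point propagates to global vanishing, producing the desired lift $G$ and contradicting $(\Zc_2)_{(\mu-1,n-1)}=0$.
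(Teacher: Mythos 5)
Your forward direction is correct and, pleasantly, more elementary than the paper's. The paper uses the long exact sequence in Koszul homology attached to the mapping cone and then compares graded pieces of $H_2$ and $H_1'$ via the generation degree of the Koszul boundaries; you instead unpack $Z_2$ as the space of skew-symmetric $4\times 4$ matrices whose columns are syzygies of $\fb$ and kill the entries column by column. Both work, and your version makes the mechanism transparent; the only housekeeping you implicitly do (and should state) is that you can simultaneously replace the minimal generating set $\fb$ by a $k$-basis of $I_{(m,n)}$ whose first three members are $\fb'$, which leaves $Z_2(\fb;R)$ unchanged up to isomorphism.

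The reverse direction, however, has a genuine gap at the exact point you flag as ``the heart of the argument.'' Your local computation correctly shows that the class $[f_3 y]\in H_1(\fb')$ dies in the localization $H_1(\fb')_p$ at every base point $p\in\Bc$, using $I'_p=I_p\ni f_3$ and the explicit Koszul primitive. But $H_1(\fb')$ is a bigraded $R$-module whose support in $\mathrm{Spec}\,R$ is $V(I')$, which contains not only the primes lying over $\Bc\subset\PP^1\times\PP^1$ but also primes containing the irrelevant ideal $\mm$. Consequently, ``vanishing in all $H_1(\fb')_p$ for $p\in\Bc$'' only shows $[f_3 y]\in H^0_\mm(H_1(\fb'))$; it does not give $[f_3 y]=0$. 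In the terminology of the Cox--Schenck/Hoffman--Wang circle of ideas, you have shown that $f_3 y$ is a syzygy of $\fb'$ that vanishes at all base points, but the nontrivial content is that such a syzygy is actually a Koszul syzygy, equivalently that $H^0_\mm(H_1(\fb'))$ vanishes in the relevant bidegree. This is precisely where the paper invokes \cite[Corollary 3.15]{HoWa06} (see also \cite[Theorem 3.10]{AHW05}), and the hypothesis there is sensitive to the $R_2$-degree being exactly $2n-1$ --- a constraint your argument never sees. Without citing that result (or reproving it), the reverse implication does not close. If you want to keep your route, you should state explicitly that the local step yields $[f_3 y]\in H^0_\mm(H_1(\fb'))$ and then supply the vanishing $H^0_\mm(H_1(\fb'))_{(\mu-1+2m,3n-1)}=0$ via the l.c.i.\ hypothesis and Hoffman--Wang; as written, the passage from local to global vanishing is asserted, not proved.
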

\begin{proof}
Since the $f_i$'s are minimal generators of $I$, they form a $k$-basis of $I_{(m,n)}$ and hence $I'=I+(h)$ for some $h\in I_{(m,n)}$. Thus, the Koszul homology modules $H_p$ of the Koszul complex $K_\bullet(\fb;R)$ are isomorphic to the Koszul homology modules $H_p(\fb',h;R)$ of the Koszul complex $K_\bullet(\fb',h;R)$. Moreover, by classical properties of Koszul complexes (see \cite[Section 1.6]{BrHe93}) one has the exact sequence of complexes
$$ 0\rightarrow K_\bullet(\fb';R) \xrightarrow{\iota_\bullet} K_\bullet(\fb',h;R) \xrightarrow{\pi_\bullet} K_\bullet(\fb';R)[-m,-n] \rightarrow 0$$
where $\iota$ is the canonical inclusion and $\pi$ the canonical projection. It yields the long exact sequence (\cite[Corollary 1.6.13]{BrHe93}):
\begin{equation}\label{eq:koszulseq}
\cdots \rightarrow H_2' \xrightarrow{\overline{\iota_2}} H_2 \xrightarrow{\overline{\pi_2}} H_1'[-m,-n]
\xrightarrow{\cdot(-h)} H_1' \rightarrow \cdots.	
\end{equation}
Now, since $I'$ has depth 2 we have $H_3'=H_2'=0$. Moreover, since $B_1'$ is generated in degree $(2m,2n)$, for any $\mu$ we have
$$(H_1')[-m,-n]_{(\mu-1+2m,3n-1)}={H_1'}_{(\mu-1+m,2n-1)}={Z_1'}_{(\mu-1+m,2n-1)}.$$
Therefore, if ${Z_1'}_{(\mu-1+m,2n-1)}=0$ then we deduce that ${H_2}_{(\mu-1+2m,3n-1)}=0$. But again, since $B_2$ is generated in degree $(3m,3n)$ we have
$${H_2}_{(\mu-1+2m,3n-1)}={Z_2}_{(\mu-1+2m,3n-1)}$$ for any $\mu$. Therefore, we proved if there is no syzygies of $\fb'$ in degree $(\mu-1,n-1)$ then
${\Zc_2}_{(\mu-1,n-1)}$, as claimed.

Now, we turn to the second claim in this proposition, so we assume that all the base points are l.c.i. In this case, one can assume that $h$ belongs to the saturation ${I'}^{\textrm{sat}}$ of $I'$ with respect to $\mm$; see \cite[Lemma 4.2]{AHW05}. Let $\mu$ be an integer and consider the following graded piece of the multiplication map in \eqref{eq:koszulseq}:
\begin{equation}\label{eq:nulmap}
(H_1'[-m,-n])_{(\mu-1+2m,3n-1)}=(H_1')_{(\mu-1+m,2n-1)}
\xrightarrow{\cdot(-h)} (H_1')_{(\mu-1+2m,3n-1)}.	
\end{equation}
This map send a syzygy $(g_0',g_1',g_2')$ of $I'$ to the syzygy $(-hg_0',-hg_1',-hg_2')$. Since $h$ belongs to ${I'}^{\textrm{sat}}$, this latter syzygy is said to vanish at all the base points, which means that all its components $hg_i'$ belong to ${I'}^{\textrm{sat}}$ (see \cite[Definition 1.5]{HoWa06}). Then, by \cite[Corollary 3.15]{HoWa06} (see also \cite[Theorem 3.10]{AHW05}) we deduce that the syzygy $(-hg_0',-hg_1',-hg_2')$ is a Koszul syzygy, i.e.~that it belongs to $B_1'$. To apply this result it is important to notice that the syzygy $(-hg_0',-hg_1',-hg_2')$ belongs to $(Z_1')_{(\mu-1+2m,3n-1)}$, so its components are polynomials of degree $(\mu-1+m,2n-1)$ in $R$, which allows us to apply \cite[Corollary 3.15]{HoWa06} as the degree with respect to $R_2$ is precisely equal to $2n-1$. These considerations show that the map \eqref{eq:nulmap} is the null map for any integer $\mu$. Moreover, both modules $B_2$ and $B_1'$ are null in the degree we are considering and hence we deduce that for any integer $\mu$ we have (recall that $H_2'=0$):
$$ {Z_2}_{(\mu-1+2m,3n-1)}\simeq {Z_1'}_{(\mu-1+m,2n-1)}.$$
From here, the claimed equivalence follows by taking into account the shifts in the gradings.
\end{proof}

The structure of the syzygies of $I'$ that are of degree $n-1$ with respect to $R_2$ can be analyzed as we did in Section \ref{sec:MQ-SI} for the syzygies of $I$. Consider the module
$${\overline{Z}_1}':=\oplus_{\mu \geq 0} {Z_1'}_{(\mu,2n-1)}.$$
It is a graded $R_1=k[s_0,s_1]$-module that fits in the exact sequence
$$ 0 \rightarrow {\overline{Z}_1}' \rightarrow \oplus_{i=1}^3 R_1(-m)\otimes (R_2)_{n-1} \xrightarrow{f_0',f_1',f_2'} R_1\otimes (R_2)_{2n-1}.$$
The module ${\overline{Z}_1}'$ is free of rank $n$ and hence there exist non negative integers $\mu_1',\ldots,\mu_{n}'$ such that
\begin{equation}\label{eq:Z1pbar}
{\overline{Z}_1}' \simeq \oplus_{i=1}^{n} R_1(-m-\mu_i').	
\end{equation}

\begin{definition} We set $\eta_0:=\min_{1\leq i \leq n} \mu_i'$; this is the largest integer such that there is no nonzero syzygy of $\fb'$ of degree $(\mu-1,n-1)$ for any $\mu\leq \eta_0$.
\end{definition}

\begin{lem}\label{lem:eta0} We have $2m-r \leq \eta_0 \leq 2m - \frac{r}{n}$.
\end{lem}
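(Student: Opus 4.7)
The plan is to parallel the proof of Proposition \ref{prop:freeness}, replacing the four-generator sequence $\fb$ by the three-generator sequence $\fb'$. First, since $I'$ and $I$ share the same saturation $\tilde{I}$, defining the zero-dimensional scheme $\Bc$ of degree $r$, the Hilbert polynomial of $R/I'$ is the constant $r$. From the exact sequence of graded $R_1$-modules
$$0 \to \overline{Z}_1' \to R_1(-m)^3 \otimes_k (R_2)_{n-1} \xrightarrow{\fb'} R_1 \otimes_k (R_2)_{2n-1} \to (R/I')_{(*, 2n-1)} \to 0,$$
a Hilbert-polynomial comparison with the presentation \eqref{eq:Z1pbar} gives $\sum_{i=1}^n \mu_i' = 2mn - r$. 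The upper bound $\eta_0 \leq 2m - r/n$ follows at once, since $\eta_0 = \min \mu_i'$ is bounded above by the average of the $\mu_i'$.

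For the lower bound, it suffices to show $\max_i \mu_i' \leq 2m$: combined with $\sum \mu_i' = 2mn - r$, this yields $2m - \mu_i' \in [0, r]$ for each $i$, hence $\mu_i' \geq 2m - r$. Specializing the exact sequence above at $R_1$-degree $d = 3m-1$ gives the identity $\HF_{\overline{Z}_1'}(3m-1) = \HF_{R/I'}(3m-1, 2n-1)$, while \eqref{eq:Z1pbar} computes $\HF_{\overline{Z}_1'}(3m-1) = \sum_i \max(0, 2m - \mu_i')$. If all $\mu_i' \leq 2m$ then this equals $\sum_i(2m - \mu_i') = r$; if some $\mu_i' > 2m$, then such terms contribute $0$ instead of the negative value $2m - \mu_i'$, so the sum is strictly larger than $r$. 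Hence the lower bound reduces to showing $\HF_{R/I'}(3m-1, 2n-1) = r$.

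To prove this, I would transport the spectral-sequence argument of Lemma \ref{lem:HmHp} to the Koszul complex $K_\bullet(\fb'; R)$. That argument uses only that $R/I'$ is supported in $\Bc$, a zero-dimensional subscheme of $\PP^1 \times \PP^1$ (which yields $H^i_\mm(R/I') = 0$ for $i > 1$), together with the explicit shape \eqref{eq:loc-coho} of $H^i_\mm(R)$; neither ingredient is sensitive to the number of Koszul generators. The argument reduces the vanishings $H^0_\mm(R/I')_{(3m-1, 2n-1)} = H^1_\mm(R/I')_{(3m-1, 2n-1)} = 0$ to verifying $H^2_\mm(K_q')_{(3m-1, 2n-1)} = 0$ for $q = 1, 2$ and $H^3_\mm(K_q')_{(3m-1, 2n-1)} = 0$ for $q = 2, 3$; each is immediate from \eqref{eq:loc-coho} because the shifted bidegrees $(3m-1 - qm, 2n-1 - qn)$ fall outside both nonzero regions. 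The Grothendieck--Serre formula then forces $\HF_{R/I'}(3m-1, 2n-1) = r$. The main technical obstacle is precisely this transfer of Lemma \ref{lem:HmHp} to $\fb'$, which is routine given that its proof is essentially independent of the number of generators.
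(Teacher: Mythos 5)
Your proof is correct and follows essentially the same route as the paper's: both derive $\sum_i \mu_i' = 2mn - r$ from the Grothendieck--Serre formula and the freeness of $\overline{Z}_1'$, obtain the upper bound as $\min \leq$ average, and obtain the lower bound by establishing $\max_i \mu_i' \leq 2m$ via the vanishing of $H^0_\mm(H_0')$ and $H^1_\mm(H_0')$ in the appropriate bidegree, which in turn comes from transporting the spectral-sequence argument to the Koszul complex of $\fb'$. The only cosmetic difference is that you specialize at the single degree $3m-1$ (i.e.\ $\mu = 2m$) and argue via the positivity of the terms $\max(0, 2m-\mu_i')$, whereas the paper phrases the same conclusion as polynomiality of $\HF_{Z_1'}(\mu-1+m, 2n-1)$ for all $\mu \geq 2m$; these are equivalent observations.
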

\begin{proof} We first begin by examining the Hilbert function of $H_0'$. By similar results as the ones in Lemma \ref{lem:HmZp}, we get that
	$H^0_\mm(H_0')_{(\mu,\nu)}=0$ if $H^2_\mm(K_2')_{(\mu,\nu)}=H^3_\mm(K_3')_{(\mu,\nu)}=0$. In the same way, $H^1_\mm(H_0')_{(\mu,\nu)}=0$ if $H^3_\mm(K_2')_{(\mu,\nu)}=0$. Gathering these two conditions, we deduce from \eqref{eq:loc-coho} that for any integer $\mu\geq 2m$ we have
	$$ H^0_\mm(H_0')_{(\mu-1+m,2n-1)}=H^1_\mm(H_0')_{(\mu-1+m,2n-1)}=0$$
	and hence $\HF_{H_0'}(\mu-1+m,2n-1)=r$ by the Grothendieck-Serre formula.
	
	Now, from the exact sequence of bigraded $R$-modules
		$$ 0 \rightarrow Z_1' \rightarrow \oplus_{i=1}^3 R(-m,-n) \xrightarrow{f_0',f_1',f_2'} R \rightarrow H_0' \rightarrow 0$$
we deduce that
	\begin{multline}\label{eq:HFHOp}
		\HF_{Z_1'}(\mu-1+m,2n-1) -3\HF_{R}(\mu-1,n-1) \\ + \HF_{R}(\mu-1+m,2n-1) - \HF_{H_0'}(\mu-1+m,2n-1)=0.
	\end{multline}
	Therefore, using \eqref{eq:Z1pbar} and taking $\mu\gg 0$ we deduce that
	$$\sum_{i=1}^n \mu_i'=2mn -r.$$
	This equality implies that $\eta_0 \leq 2m-\frac{r}{n}$.
	
	To conclude, the equality \eqref{eq:HFHOp} shows that $\HF_{Z_1'}(\mu-1+m,2n-1)$ is a polynomial for any $\mu\geq 2m$ because for those values of $\mu$ the term
	$\HF_{H_0'}(\mu-1+m,2n-1)$ is a (constant) polynomial function. From \eqref{eq:Z1pbar} it follows that $\max_i \mu_i'\leq 2m$. But since $\sum_{i=1}^n \mu_i'=2mn-r$ we deduce that $\eta_0\geq 2m-r$.

\end{proof}

We are now ready to state the main result of this section.

\begin{thm}\label{thm:main} If $\mu_0\leq\eta_0$, then for any integer $\mu$ such that
	$$m-\frac{r}{2n}\leq \mu_0 \leq \mu\leq \eta_0 \leq {2m-\frac{r}{n}}$$
	the complex \eqref{eq:Z1Z2cpx} is reduced to the map of free $S$-modules
	$$0\rightarrow (\Zc_1)_{(\mu-1,n-1)}\oplus \left(\Jc\la 2 \ra/\Jc\la 1 \ra\right)_{(\mu-1,n-1)}\rightarrow (\Zc_0)_{(\mu-1,n-1)}$$
	whose determinant is equal to
$$ F(\xb)^{\deg(\phi)}\cdot\prod_{\stackrel{p\in \Bc}{p\ a.l.c.i}} L_p(\xb)^{e_p-d_p},$$
where $F(\xb)=0$ is a defining equation of $\Sr$. In particular, if all base points are l.c.i.~we obtain a determinantal representation of $\Sr$ raised to the power $\deg(\phi)$.
\end{thm}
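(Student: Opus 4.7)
The plan is to assemble Theorem \ref{thm:quadres} with the vanishing criterion of Proposition \ref{prop:Z2Z1p}; essentially all the hard work has already been done upstream, so this theorem is a clean packaging. First, since $\mu \geq \mu_0$ by hypothesis, Theorem \ref{thm:quadres} applies directly and gives that the three-term complex \eqref{eq:Z1Z2cpx} is a minimal graded free resolution of $(\Sc_I^*)_{(\mu-1,n-1)}$, with determinant equal to $F(\xb)^{\deg(\phi)}\cdot\prod_{p\ \mathrm{a.l.c.i.}} L_p(\xb)^{e_p-d_p}$. Therefore the only thing left to verify is that the leftmost module $(\Zc_2)_{(\mu-1,n-1)}$ vanishes, so that the resolution collapses to a length-one complex of free $S$-modules of the form announced in the statement.

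For this vanishing step, I would invoke Proposition \ref{prop:Z2Z1p}: if $\fb'$ has no syzygy of bidegree $(\mu-1,n-1)$, then $(\Zc_2)_{(\mu-1,n-1)}=0$. By the very definition of $\eta_0$ as the minimum of the shifts $\mu_i'$ in the free resolution \eqref{eq:Z1pbar} of $\overline{Z}'_1$, no such syzygy exists as long as $\mu\leq \eta_0$. Combining with $\mu \geq \mu_0$ this yields exactly the range $\mu_0\leq \mu \leq \eta_0$; the outer numerical bounds $m-\tfrac{r}{2n}\leq \mu_0$ and $\eta_0\leq 2m-\tfrac{r}{n}$ displayed in the statement are then not extra hypotheses but simple consequences of Proposition \ref{prop:freeness} and Lemma \ref{lem:eta0} respectively, just recorded here to make visible that the range is nonempty in reasonable situations.

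For the final sentence about a genuine determinantal representation in the l.c.i.\ case, the point is that under that assumption the algebras coincide, $\Sc_I^* = \Rc_I$, and the linear factors $L_p$ disappear from the determinant, exactly as in Corollary \ref{cor:corthquad}. The main potential obstacle is conceptual rather than technical: one must notice that Proposition \ref{prop:Z2Z1p} provides the implication \emph{vanishing of syzygies of $\fb'$} $\Rightarrow$ \emph{vanishing of $(\Zc_2)_{(\mu-1,n-1)}$} \emph{without} requiring the base points to be l.c.i., so this step goes through under the standing assumption that the base points are merely locally generated by at most three generators. This is why the theorem can be stated in the wider a.l.c.i.\ generality, at the price of the extraneous linear factors in the determinant.
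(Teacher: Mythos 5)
Your proposal is correct and reproduces the paper's own argument, which is literally the one-line combination of Theorem \ref{thm:quadres}, Proposition \ref{prop:Z2Z1p} and Lemma \ref{lem:eta0}; you have simply unpacked that combination, correctly noting that the range $\mu\leq\eta_0$ forces $(\Zc_2)_{(\mu-1,n-1)}=0$ via Proposition \ref{prop:Z2Z1p} (whose forward implication indeed does not require the l.c.i.\ hypothesis), that the outer bounds come from Proposition \ref{prop:freeness} and Lemma \ref{lem:eta0}, and that the l.c.i.\ case collapses the determinant to $F^{\deg\phi}$ as in Corollary \ref{cor:corthquad}.
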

\begin{proof} Combine Theorem \ref{thm:quadres}, Proposition \ref{prop:Z2Z1p} and Lemma \ref{lem:eta0}.
\end{proof}

The matrix of the map considered in Theorem \ref{thm:main} is the matrix $\MM_{(\mu-1,n-1)}$ which is composed of $l_\mu=2n(\mu-n)+r$ moving planes $q_\mu=n(2m-\mu)-r$ moving quadrics following $\phi$. We notice that in the presence of a.l.c.i.~base points, one has to be careful about the definition of the space of moving quadrics. Indeed, the formalism we developed assumes that moving quadrics are elements in $\Jc$, i.e.~defining equations of $\Sc_I^*$, and not necessarily in $\Kc$, i.e.~defining equations of the Rees algebra $\Rc_I$. It turns out that these two ideals coincide if all the base points are l.c.i., but otherwise they can be different.

Below, we provide three examples to illustrate both Theorem \ref{thm:main} and Theorem \ref{thm:quadres}. In particular, we will show that the inequality $\mu_0\leq \eta_0$ does not always hold, although it seems to be quite often satisfied from our experiments, a very interesting property that deserves more investigations that we plan to do in a future work. But before that, we explain how Theorem \ref{thm:main} covers the previously known results on the validity of the method of moving quadrics. 

\medskip

\paragraph{\it No base points}
If there is no base points, i.e.~if $r=0$, then $\eta_0=2m$. It follows that we always have $\mu_0\leq \eta_0$, so that there always exists a determinantal representation of $\Sr$ (raised to the power $\deg(\phi)$). We recover here the determinantal representation obtained in \cite{LAI20191}. We emphasize that the method of moving quadrics was first introduced in \cite{CGZ00} where the authors proved that there exists a determinantal representation built solely from moving quadrics in bidegree $(m-1,n-1)$ providing there is no moving planes in this bidegree. In our context, this latter condition implies that $\mu_0=m$ and hence we recover this result but also get in addition that there is a family of determinantal representations that correspond to the values $\mu$ such that $m\leq \mu \leq 2m$. The case $\mu=m$ gives a matrix filled exclusively with moving quadrics, the one given in \cite{CGZ00}, and the case $\mu=2m$ gives a matrix filled exclusively with moving planes, the one given by Dixon \cite{DixonRes}; in between these two extreme values on gets mixed matrices of moving planes and quadrics.

\medskip

\paragraph{\it Ruled surfaces} 
The case $n=1$ corresponds to tensor product surfaces $\Sr$ that are ruled surfaces. In \cite{CZS01} it is proved the following result: let $p$ and $q$ be a basis of the syzygy module $\overline{Z}_1$ and denote also by $L_p$ and $L_q$ their corresponding moving planes; they are of degree $2m-r-\mu_0$ and $\mu_0$ respectively. Then, the Sylvester resultant matrix of $L_p$ and $L_q$ yields a determinantal representation of $\Sr$ (raised to the power $\deg(\phi)$). In our context, since $n=1$ we get $\eta_0=2m-r$ and hence we always have $\mu_0\leq \eta_0$. Thus, the matrix given in \cite{CZS01} corresponds to the case $\mu=\eta_0=2m-r$. We notice that if $\mu_0<\eta_0$ then we also get smaller matrices with moving quadrics; these matrices actually correspond to the Hybrid B\'ezout resultant matrices associated to $L_p$ and $L_q$ for the values $\mu_0\leq \mu\leq 2m-r$.
	
\medskip

\paragraph{\it Results on the matrix $\MM_{(m-1,n-1)}$}  
In \cite{AHW05} the authors prove the validity of the method of moving quadrics under several assumptions; see \cite[Section 4]{AHW05} for more details. Translated in our framework, they assume that
	$m\leq \eta_0$, $\mu_0\leq m$ and they consider the matrix obtained with $\mu=m$, i.e.~the matrix $\MM_{(m-1,n-1)}$. This case is hence a direct consequence of Theorem \ref{thm:main}. We also notice that the assumption $\eta_0\geq m$ implies, by Lemma \ref{lem:eta0}, that $r\leq mn$.

\begin{ex} Consider the map $\phi$ defined by the following polynomials:
\begin{align*}
      f_0&=-s_0^{3}t_0^{2}-2\,s_1^{3}t_0^{2}+s_0^{3}t_1^{2}+2\,s_1^{3}t_1^{2},\\
	  f_1&=4\,s_0^{3}t_0\,t_1+8\,s_1^{3}t_0\,t_1,\\
	  f_2&=s_0\,s_1^{2}t_0^{2}+s_0\,s_1^{2}t_1^{2},\\
	  f_3&=2\,s_1^{3}t_0^{2}+2\,s_1^{3}t_1^{2}.
\end{align*}
We have $m=3$ and $n=2$. Using {\tt Macaulay2} \cite{M2}, we see that this map parameterizes a tensor-product surface of degree 6, that it is generically injective and that its base scheme has degree 6. We also get $\mu_0=2$ and $\eta_0=3$, and $\nu_0=\zeta_0=2$ in the symmetric case. Therefore, Theorem \ref{thm:main} yields two determinantal formula for $\Sr$. The first one is obtained in bidegree $(1,1)$ and is filled with two moving planes and two moving quadrics:
$$\begin{pmatrix}
      x_3&0&0&4 x_2^{2}\\
      {-2 x_2}&0&x_1 x_3&2 x_0 x_3+2 x_3^{2}\\
      0&x_3&{-4 x_2^{2}}&0\\
      0&{-2 x_2}&2 x_0 x_3-2 x_3^{2}&{-x_1 x_3}\end{pmatrix}$$
The second one is obtained in degree $(2,1)$ and is made exclusively of moving planes:
$$
\begin{pmatrix}
      x_3&0&0&0&0&2 x_2\\
      {-2 x_2}&0&x_3&0&0&0\\
      0&0&{-2 x_2}&0&x_1&2 x_0+2 x_3\\
      0&x_3&0&0&{-2 x_2}&0\\
      0&{-2 x_2}&0&x_3&0&0\\
      0&0&0&{-2 x_2}&2 x_0-2 x_3&{-x_1}\end{pmatrix}$$
This latter matrix, fully filled with linear forms, falls in the cases that are covered by the results in \cite{AHW05}. We also notice that Theorem \ref{thm:botbol} yields its more compact representation of $\Sr$ in degree $(5,1)$. Therefore, we see that Theorem \ref{thm:quadres} extends its validity to the additional degrees $(1,1), (2,1),(3,1)$ and $(4,1)$, moving quadrics being needed only in bidegree $(1,1)$ and determinantal formulas being obtained in bidegree $(1,1)$ and $(2,1)$.
\end{ex}

\begin{ex} Consider the map $\phi$ defined by the polynomials
\begin{align*}
	      f_0 &=s_1^{2}t_0^{2}+s_1^{2}t_0\,t_1+s_0\,s_1\,t_1^{2},\\
		  f_1 &=s_1^{2}t_0^{2}+s_0^{2}t_0\,t_1+s_1^{2}t_0\,t_1+2\,s_0\,s_1\,t_1^{2},\\
		  f_2 &=s_0\,s_1\,t_0^{2}+s_0^{2}t_0\,t_1+s_0\,s_1\,t_1^{2}+s_1^{2}t_1^{2},\\
		  f_3&= s_0^{2}t_0^{2}+s_1^{2}t_0^{2}+s_1^{2}t_0\,t_1+s_0\,s_1\,t_1^{2}.
\end{align*}
We have $m=n=2$. Using {\tt Macaulay2} \cite{M2}, we see that this map parameterizes a tensor-product surface of degree 7, that it is generically injective and has a single base point of coordinates $(1:0)\times(0:1)$. We also get $\mu_0=3$ and $\eta_0=3$. Therefore, Theorem \ref{thm:main} shows that the matrix $\MM_{(2,1)}$ is a determinantal representation of $\Sr$; it is equal to
\begin{center}
  \includegraphics[width=1\linewidth]{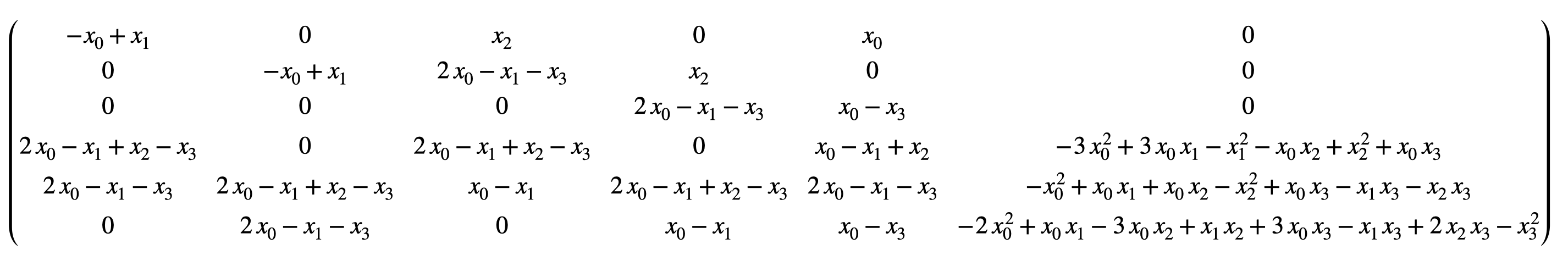}	
\end{center}

\noindent We notice that this example does not fit in the previously known cases we have listed right after the proof of Theorem \ref{thm:main}. 
\end{ex}

\begin{ex}\label{ex:length2} Consider the rational map $\phi$ defined by the polynomials $f_0,f_1,f_2,f_3$ obtained as the 3-minors of the following matrix:
		$$\left(\begin{array}{ccc}
		       s_0&0&s_1^{2}t_0^{2}+s_0\,s_1\,t_1^{2}\\
		       s_1&t_0-t_1&s_0\,s_1\,t_0^{2}+s_0^{2}t_1^{2}\\
		       s_0+s_1&t_0&s_1^{2}t_0\,t_1+s_0\,s_1\,t_1^{2}\\
		       0&t_1&s_0^{2}t_0\,t_1+s_1^{2}t_1^{2}
			   \end{array}\right).$$	
We have $m=3,n=3$ and computations with {\tt Macaulay2} \cite{M2} show that $r=13$, $\deg \Sr=5$ and that all the base points are locally complete intersections. We also get $\mu_0=2$ and $\eta_0=1$ (and $\nu_0=2$, $\zeta_0=1$ in the symmetric case). Therefore, Theorem \ref{thm:main} does not apply. Nevertheless, the complex \eqref{eq:Z1Z2cpx} in degree $(\mu_0-1,n-1)=(1,2)$ is of the form 
$$ 0 \rightarrow S^2(-2) \xrightarrow{\partial_2} S^7(-1)\oplus S^1(-2)\xrightarrow{\partial_1} S^6$$
where the matrices of $\partial_2$ and $\partial_1$ are respectively:
\begin{center}
  \includegraphics[width=1\linewidth]{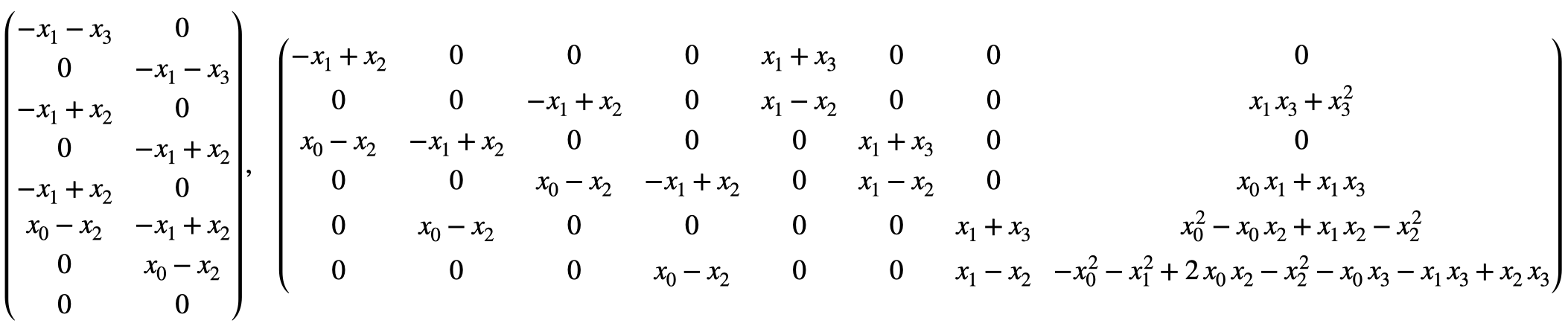}	
\end{center}

By Theorem \ref{thm:quadres}, an implicit equation of $\Sr$ is obtained as the ratio of a 6-minor of $\partial_1$ divided by its corresponding 2-minor of $\partial_2$. Similar expressions are obtained in bidegree $(\mu-1,2)$ for any $\mu\geq 2$ and in bidegree $(2,\nu-1)$ for any $\nu\geq \nu_0=2$. We also observe that in bidegree $(2,2), (3,2)$, $(2,3)$, $(4,2)$ and $(2,4)$ Theorem \ref{thm:quadres} extends the results of Botbol (see Theorem \ref{thm:botbol}) without adding moving quadrics.

It turns out that for this example there is no couple of integers $(\mu,\nu)$ such that the matrix of moving planes and quadrics $\MM_{(\mu,\nu)}$ yields a determinantal representation of $\Sr$. Indeed, we already examined matrices $\MM_{(\mu-1,n-1)}$ and $\MM_{(m-1,\nu-1)}$. Now, if one seeks for determinantal representations of $\Sr$ by direct computations, since $\deg(\Sr)=5$ the other possible bidegrees to examine are $(1,1), (3,0)$ and $(0,3)$. However, in these three cases one never gets a determinantal formula.
\end{ex}

\subsection*{Acknowledgments}
Computations in {\tt Macaulay2} \cite{M2} provided evidence for our work and we thank its authors and contributors. 
We are also grateful to the organizers of the conference ``Ideals, Varieties, Applications'' that took place in Amherst, June 10-14, 2019, and was held in honor of David Cox on the occasion of his retirement. It was a wonderful opportunity to celebrate his influence that we hope this paper illustrates once again.

\end{document}